\title{Quantum homology of compact convex symplectic manifolds}
\author{Sergei Lanzat}
\address{Max-Planck-Institut f\"{u}r Mathematik, 53111 Bonn, Germany}
\email{serjl@mpim-bonn.mpg.de}
\date{\today}
\renewcommand{\(}{\left(}
\renewcommand{\)}{\right)}
\newcommand{\FAT}[1]{\mbox{{$\mathbb{#1}$}}}
\newcommand{\Fat}[1]{\mbox{{$\scriptstyle\mathbb{#1}$}}}
\newcommand{\CL}[1]{\mbox{{$\mathcal{#1}$}}}
\newcommand{\KL}[1]{\mbox{{$\mathscr{#1}$}}}
\newcommand{\Kl}[1]{\mbox{{$\scriptstyle\mathscr{#1}$}}}
\newcommand{\til}[1]{\widetilde{#1}}
\renewcommand{\hat}[1]{\widehat{#1}}
\newcommand{\ZZ}{\FAT{Z}}
\newcommand{\NN}{\FAT{N}}
\newcommand{\FF}{\FAT{F}}
\newcommand{\DD}{\FAT{D}}
\newcommand{\QQ}{\FAT{Q}}
\newcommand{\KK}{\FAT{K}}
\newcommand{\PP}{\FAT{P}}
\newcommand{\BB}{\FAT{B}}
\newcommand{\RR}{\FAT{R}}
\newcommand{\CC}{\FAT{C}}
\newcommand{\zz}{\Fat{Z}}
\newcommand{\ff}{\Fat{F}}
\newcommand{\nn}{\Fat{N}}
\newcommand{\qq}{\Fat{Q}}
\newcommand{\IFF}{\Leftrightarrow}
\newcommand{\then}{\Rightarrow}
\newcommand{\cl}[1]{\overline{#1}}
\newcommand{\minus}{\smallsetminus}
\newcommand{\ve}{\varepsilon}
\newcommand{\la}{\langle}
\newcommand{\ra}{\rangle}
\newcommand{\del}{\partial}
\newcommand{\Id}{\mathds{1}}
\newcommand{\be}{\begin{itemize}}
\newcommand{\ee}{\end{itemize}}
\newcommand{\beq}{\begin{equation}}
\newcommand{\eeq}{\end{equation}}
\newcommand{\beqn}{\begin{equation}\nonumber}
\newcommand{\bea}{\begin{equation}\begin{aligned}}
\newcommand{\eea}{\end{aligned}\end{equation}}
\newcommand{\bean}{\begin{equation}\nonumber\begin{aligned}}
\DeclareMathOperator{\ev}{ev}
\DeclareMathOperator{\codim}{codim}
\DeclareMathOperator{\Ham}{Ham}
\newtheorem{thm}{Theorem}[section]
\newtheorem{thm*}{Theorem}
\newtheorem{lem}[thm]{Lemma}
\newtheorem{lem*}[thm*]{Lemma}
\newtheorem{prop}[thm]{Proposition}
\newtheorem{cor}[thm]{Corollary}
\newtheorem{defn}[thm]{Definition}
\newtheorem{thm-defn}[thm]{Theorem-Definition}
\theoremstyle{definition}
\newtheorem{rem}[thm]{Remark}
\newtheorem{exs}{Examples}
\newtheorem{ex}{Example}
\begin{document}

 \begin{abstract}
We study the space of pseudo-holomorphic spheres in compact symplectic manifolds with convex boundary. We show that the theory of Gromov-Witten invariants can be extended to the class of semi-positive manifolds with convex boundary. This leads to a deformation of intersection products on the absolute  and relative singular homologies. As a result,  absolute  and relative  quantum homology algebras are defined analogously to the case of closed symplectic manifolds. In addition, we prove the Poincar\'{e}-Lefschetz duality for the absolute  and relative  quantum homology algebras.

 \end{abstract}

\keywords{pseudo-holomorphic curves, Gromov-Witten invariants, quantum homology, convex symplectic manifolds.}
%
%

\maketitle

\section{Introduction and results.}
\subsection{History and motivation}
The celebrated work of Gromov \cite{Gr} opened the era of invariants of a symplectic manifold based on the counting of  pseudo-holomorphic curves in it. The first example of such invariants, now  called the Gromov invariant, was defined by Gromov itself in the same paper \cite{Gr}, which led to the proof of the famous  Gromov's non-squeezing theorem: The symplectic ball of radius $r$ can not be symplectically
embedded into the cylinder $B^2_R\times\CC^n$ for $R < r$.
More subtle invariants, now known under the name of Gromov-Witten (GW) invariants, appeared later in physics as correlation functions in Witten's topological sigma models \cite{W}, and were put on a solid mathematical basis by Ruan and Tian in  \cite{Ruan}, \cite{RT}. See also \cite{MS2},  \cite{MS3}. One of the consequences of GW invariants is the theory of quantum homology of a symplectic manifold, i.e. the deformation of the intersection product in its ordinary homology. Historically, these kinds of structures were considered in the case of closed (semi-positive) symplectic manifolds.

The purpose of this paper is to extend the theory of Gromov-Witten invariants and quantum homology to the class of semi-positive convex compact symplectic manifolds.

\subsection{Setting}
Consider a $2n$-dimensional compact symplectic manifold $(M,\omega)$ with non-empty boundary
$\del M$. Recall the following

\begin{defn}\label{Def:Convex symplectic manifolds}(cf. \cite{F-S}, \cite{McDuff}, \cite{MS3})
\mbox{}
 The boundary  $\del M$  is called convex if there exists a Liouville vector field $X$ (i.e. $\CL{L}_X\omega=d\iota_X\omega=\omega$), which is defined in the neighborhood of $\del M$ and which is everywhere transverse to $\del M$, pointing outwards; equivalently, there exists a $1$-form $\alpha$ on $\del M$ such that $d\alpha=\omega\mid_{\del M}$ and such that $\alpha\wedge(d\alpha)^{n-1}$ is a volume form inducing the boundary orientation of $\del M\subset M$. Therefore, $(\del M,\ker\alpha)$ is a contact manifold, and that is why a convex boundary is also called of a contact type.

A compact symplectic manifold $(M,\omega)$ with non-empty boundary $\del M$ is convex if $\del M$ is convex.

A non-compact symplectic manifold $(M,\omega)$ is \textit{convex} if there exists an increasing sequence of compact convex submanifolds $M_i\subset M$ exhausting $M$, that is, $$M_1\subset M_2\subset\ldots\subset M_i\subset\ldots\subset M\;\;\;\text{and}\;\;\;\bigcup\limits_{i}M_i=M.$$

\end{defn}

\begin{exs}
\item[(1)]The standard $(\RR^{2n},\omega_0)$ exhausted by balls.
\item[(2)]Cotangent $r$-ball bundles $(\DD_r T^*X,\omega_{can})$.
\item[(3)]Cotangent bundles $(T^*X,\omega_{can})$ over a closed base, exhausted by ball bundles, $T^*X=\bigcup\limits_{r>0}\DD_r T^*X.$
\item[(4)]Stein manifolds $(M,J,f)$, where $(M,J)$ is an open complex manifold and $f:M\to\RR$ is a smooth exhausting plurisubharmonic function without critical points off a compact subset of $M$. Here, "exhausting" means that $f$ is proper and bounded from below, and "plurisubharmonic" means that $\omega_f:=-d(df\circ J)$ is a symplectic form with Levi form $\omega_f(v,Jv)>0$ for all $0\neq v\in TM $. Then the gradient $X=\nabla f$ with respect to the K\"{a}hler form $\omega_f\circ(id\times J)$ satisfies $\CL{L}_X\omega_f=d\iota_X\omega_f=\omega_f$. Furthermore, if the critical points of $f$ are in, say, $\{f<1\}$, then the manifold $M$ is exhausted by the exact convex symplectic manifolds $M_i=\{f\leq i\}$.

\item[(5)] Symplectic blow-up of the above examples at finitely many interior points.

\end{exs}

Recall  that an \textit{almost complex structure}  on a smooth $2n$-dimensional manifold $M$ is a section $J$ of the bundle $\mathrm{End}\ TM$ such that $J^2(x)=-\mathds{1}_{T_xM}$ for every $x\in M$. An almost complex structure $J$ on $M$ is called compatible with $\omega$ ( or $\omega$-compatible) if $g_J:=\omega\circ(\mathds{1}\times J)$ defines a riemannian metric on $M$.
Denote by $\CL{J}(M,\omega)$ the space of all $\omega$-compatible almost complex structures on $(M,\omega)$.

Given $(M,\omega)$ and  $J\in\CL{J}(M,\omega)$, then $(TM,J)$ becomes a complex vector bundle and, as such, its first Chern class $c_1(TM,J,\omega)\in H^2(M;\ZZ)$ is defined. Note that since the space $\CL{J}(M,\omega)$  is non-empty and contractible, (see \cite[ Proposition $4.1, (i)$]{MS}), the class $c_1(TM,J,\omega)$  does not depend on $J\in\CL{J}(M,\omega)$, and we shall denote it just by $c_1(TM,\omega)$.

Let $H_2^S(M)$ be the image of the Hurewicz homomorphism $\pi_2(M)\to H_2(M,\mathbb{Z})$. The homomorphisms  $c_1: H_2^S(M)\to\mathbb{Z}$ and $\omega: H_2^S(M)\to\mathbb{R}$ are given by $c_1(A):=c_1(TM,\omega)(A)$ and $\omega(A)=[\omega](A)$ respectively.
The \textit{minimal Chern number}  of a symplectic manifold $(M,\omega)$ is the integer $N>0$, such that $\mathrm{Im}(c_1)=N\cdot\ZZ$. If $c_1(A)=0$ for every $A\in H_2^S(M)$, then we define the minimal Chern number to be $N=\infty$.

\begin{defn}\label{defn: semi-positive manifolds}(cf. \cite{H-S}, \cite{McDuff})
\mbox{}
A  symplectic $2n$-manifold $(M,\omega)$ is called semi-positive, if $\omega(A)\leq 0$ for  any $A\in H_2^S(M)$ with $3-n\leq c_1(A)<0$.
\end{defn}
\noindent The semi-positivity can be characterized by the following
\begin{lem}\label{lemma: characterization of semi-positive symplectic manifolds}(E.g. \cite[ Lemma $1.1$]{H-S}.)\\
A  symplectic $2n$-manifold $(M,\omega)$ is semi-positive if and only if one of the following conditions is satisfied.
\be
\item[$(i)$] $\omega=\alpha c_1 $, for some $\alpha\geq 0$.

\item[$(ii)$]$c_1(A)=0$ for every $A\in H_2^S(M)$.

\item[$(iii)$] The minimal Chern number $N$ satisfies $N\geq n-2$.
\ee
\end{lem}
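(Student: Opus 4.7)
I would split the proof along the two implications. For the easy direction I would verify that each of (i)--(iii) implies semi-positivity, largely by vacuity: if (i) holds then $\omega(A)=\alpha c_1(A)\leq 0$ whenever $c_1(A)<0$ and $\alpha\geq 0$; if (ii) holds there is no $A\in H_2^S(M)$ with $3-n\leq c_1(A)<0$ to check; and if (iii) holds then any $A$ with $c_1(A)<0$ automatically satisfies $c_1(A)\leq -N\leq -(n-2)<3-n$, so again the range of classes to be tested is empty.

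For the converse I would argue contrapositively: assuming semi-positivity together with the negations of (ii) and (iii), I would establish (i). Under these assumptions $N$ is a finite positive integer with $N\leq n-3$ (in particular $n\geq 4$), and I would pick a class $A_0\in H_2^S(M)$ realising $c_1(A_0)=-N$. The desired identity $\omega=\alpha c_1$ with $\alpha:=-\omega(A_0)/N$ would then follow at once from the subclaim that $\omega$ vanishes on $\ker c_1\subset H_2^S(M)$: for any $B\in H_2^S(M)$ one writes $c_1(B)=mN$ with $m\in\ZZ$, so that $B+mA_0\in\ker c_1$, whence $\omega(B)=-m\,\omega(A_0)=\alpha c_1(B)$. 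Positivity of $\alpha$ is exactly the semi-positivity inequality applied to $A_0$ itself, since $c_1(A_0)=-N$ lies in $[3-n,0)$.

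The hardest step, and the heart of the argument, is the subclaim $\omega|_{\ker c_1}=0$. To prove it I would use the following trick: given $C\in\ker c_1$, every class of the form $kC+A_0$ with $k\in\ZZ$ still satisfies $c_1(kC+A_0)=-N\in[3-n,0)$, so semi-positivity yields
\[
k\,\omega(C)+\omega(A_0)=\omega(kC+A_0)\leq 0 \qquad \text{for every } k\in\ZZ.
\]
Sending $k\to\pm\infty$ forces $\omega(C)=0$. This is precisely the point at which the full content of the semi-positivity hypothesis is used: the inequality is required on an entire interval of negative Chern numbers, not merely at a single value, and it is the resulting freedom to rescale $C$ by arbitrary integers that pins down $\omega(C)$.
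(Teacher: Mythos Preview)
Your argument is correct. The easy direction is handled cleanly, and for the converse the key trick---applying semi-positivity to the family $kC+A_0$ with $k\in\ZZ$ to force $\omega|_{\ker c_1}=0$---is exactly the right idea and is carried out without gaps; the verification that $c_1(A_0)=-N$ lies in $[3-n,0)$ when $N\leq n-3$ is also correct.

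As for comparison: the paper does not supply its own proof of this lemma. It is stated with a citation to \cite[Lemma~1.1]{H-S} and used as a black box, so there is nothing in the paper to weigh your approach against. Your write-up is essentially a self-contained proof of the cited result.
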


\subsection{Main results and the structure of the paper}
Throughout the paper we consider a semi-positive convex compact symplectic manifold $(M,\omega)$. We shall always work over the base field $\mathbb{F}$, which is either $\mathbb{Z}_2$ or $\mathbb{Q}$.

In Section 2 we analyze the moduli spaces of pseudo-holomorphic spheres in $M$. We show that due to the convexity and the semi-positivity, such moduli spaces are well defined and generically they are smooth orientable manifolds.

In Section 3 we develop  the following notion of \textit{genus zero Gromov-Witten invariant  relative to the boundary}.  Let us fix a finite number of pairwise distinct marked points $\mathbf{z}:=(z_1,\ldots,z_m)\in\(\mathbb{S}^2\)^m$. For every integer  $p\in\{0,1,\ldots,m\}$ we define the genus zero Gromov-Witten invariant $GW_{A,p,m}$  relative to the boundary as a $m$-linear (over $\mathbb{F}$) map  $GW_{A,p,m}: H_*(M;\mathbb{F})^{\times p}\times H_*(M,\partial M;\mathbb{F})^{\times (m-p)}\to\mathbb{F}$,
which roughly speaking counts the following geometric configurations.\\
If $\mathbb{F}=\mathbb{Z}_2$, choose  smooth cycles $f_i: V_i\to M$  representing classes $a_i\in H_*(M;\mathbb{Z}_2)$ for every $i=1,\ldots, p$, and choose relative smooth cycles $f_j:(V_j,\partial V_j)\to(M,\partial M)$  representing classes $a_j\in H_*(M,\partial M;\mathbb{Z}_2)$ for every $j=p+1,\ldots, m$,  such that all the maps are in general position. Then  $GW_{A,p,m}(a_1,\ldots, a_m)$ counts  the parity of $J$-holomorphic spheres in class $A\in H_2^S(M)$, such that  $z_i$ is mapped to $f_i(V_i)$ for every  $i=1,\ldots, p$, and $z_j$ is mapped to $f_j(V_j\smallsetminus\partial V_j)$ for every $j=p+1,\ldots, m$.\\
Now, let $\mathbb{F}=\mathbb{Q}$. For  non-zero  $a_i\in H_*(M;\mathbb{Q})$, $i=1,\ldots, p$, there exist non-zero $r_i\in\mathbb{Q}$ and  smooth cycles $f_i: V_i\to M$ representing $r_ia_i$, and for non-zero $a_j\in H_*(M,\partial M;\mathbb{Q}),\ j=p+1,\ldots, m$,  there exist non-zero $r_j\in\mathbb{Q}$ and relative smooth cycles $f_j:(V_j,\partial V_j)\to(M,\partial M)$  representing  $r_ja_j$, such that all the maps are in general position. Then the invariant  $GW_{A,p,m}(r_1a_1,\ldots, r_ma_m)$ counts  the algebraic number of $J$-holomorphic spheres in class $A\in H_2^S(M)$, such that  $z_i$ is mapped to $f_i(V_i)$ for every  $i=1,\ldots, p$, and $z_j$ is mapped to $f_j(V_j\smallsetminus\partial V_j)$ for every $j=p+1,\ldots, m$. Define $\displaystyle GW_{A,p,m}(a_1,\ldots, a_m):=\frac{1}{r_1\cdots r_m}GW_{A,p,m}(r_1a_1,\ldots, r_ma_m)$.

In section 4 we use the Gromov-Witten invariants $GW_{A,p,m}$ to deform the classical intersection products $\bullet_i, i=1,2,3$; cf. Definition~\ref{defn: Intersection products}. These deformations give rise to the absolute (that is $H_*(M;\mathbb{F})\otimes_{\scriptstyle\mathbb{F}}\Lambda$) and the relative (that is $H_*(M,\partial M;\mathbb{F})\otimes_{\scriptstyle\mathbb{F}}\Lambda$) quantum homology algebras of $(M,\omega)$, where $\Lambda$ is the appropriate Novikov ring. We prove the Poincar\'{e}-Lefschetz duality for the absolute  and relative  quantum homology algebras. Finally, we compute the quantum homology algebras for some manifolds $(M,\omega)$.

\subsection*{Acknowledgement.}
I would like to thank Michael Entov, who introduced me to this subject, guided and helped me a lot. I am grateful to Michael Polyak for his valuable suggestions and comments in the course of my work on this paper. This work was carried out at Max-Planck-Institut f\"{u}r Mathematik, Bonn, and
I would like to acknowledge its excellent research atmosphere and hospitality.

\section{J-holomorphic curves in semi-positive convex compact symplectic manifolds.}\label{subsection: J-holomorphic spheres}
\subsection{J-holomorphic curves near a convex boundary.}\label{subsection: non-perturbed J-spheres}
Let  $(M,\omega)$ be a $2n$-dimensional compact semi-positive convex symplectic manifold and $J\in\CL{J}(M,\omega)$  is an  $\omega$-compatible almost complex structure. In addition, let $(\Sigma, j_{\Sigma})$ be a closed Riemann surface with its standard complex structure $j_{\Sigma}$.
\begin{defn}\label{Def: J-hol sphere}
A $J$-holomorphic curve is a smooth map $u:\Sigma\to M$ such that $$du\circ j_{\Sigma}=J\circ du.$$
\end{defn}

The following important lemmas show that $J$-holomorphic spheres for a suitable $J$ are bounded away from the boundary, i.e., they  lie  in the complement of some open neighborhood of $\del M$.

\begin{lem}\label{Lemma:convex boundary has Stein nieghborhood}(E.g. \cite[ Lemma $9.2.7$]{MS3}.)\\
Let  $(M,\omega)$ be a $2n$-dimensional compact convex symplectic manifold and let $X$ be a  Liouville vector field (see, Definition~\ref{Def:Convex symplectic manifolds}). Then there exists a pair $(f,J),$ where  $J\in\CL{J}(M,\omega)$  is an  $\omega$-compatible almost complex structure and $f:M\to(-\infty,0]$ is a proper smooth function such that
\be
\item For all $x\in\del M$ and for all $v\in T_x\del M$ we have $$J(x)X(x)\in T_x\del M,\;\;\;\omega(v,J(x)X(x))=0,\;\;\;\omega(X(x),J(x)X(x))=1.$$
\item $\del M=f^{-1}(\{0\})$.
\item There exists a collar neighborhood $W\cong(-\varepsilon,0]\times\del M$ of the boundary for some $\varepsilon>0$, which depends on the Liouville vector field $X$ such that  $\omega=-d(df\circ J)$ on $W$.
\ee
\end{lem}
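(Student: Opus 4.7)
The plan is to use the Liouville vector field $X$ to identify a collar neighborhood of $\del M$ with a piece of the symplectization of the contact manifold $(\del M,\ker\alpha)$, where $\alpha:=\iota_X\omega|_{\del M}$. Since $X$ is transverse to $\del M$ and points outward, its flow $\phi_t^X$ yields, for small enough $\varepsilon>0$, a diffeomorphism $\Phi:(-\varepsilon,0]\times\del M\to W$ onto a collar $W$ of $\del M$ via $\Phi(t,y):=\phi_t^X(y)$. Because $\CL{L}_X\omega=\omega$ gives $(\phi_t^X)^*\omega=e^t\omega$, a direct computation yields
\[
\Phi^*\omega \;=\; d(e^t\alpha) \;=\; e^t\,dt\wedge\alpha + e^t\,d\alpha,
\]
and $X$ pulls back to $\del_t$. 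So condition~$(1)$ becomes a condition on $J\del_t|_{t=0}$, and condition~$(3)$ becomes the Stein-type equation $\Phi^*\omega=-d(df\circ J)$ on $W$.

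Next I would construct $J$ on the collar via the contact geometry of $(\del M,\alpha)$. Let $R_\alpha$ be the Reeb vector field of $\alpha$ (characterized by $\alpha(R_\alpha)=1$ and $\iota_{R_\alpha}d\alpha=0$), let $\xi:=\ker\alpha$, and choose an almost complex structure $J_\xi$ on $\xi$ compatible with the fiberwise symplectic form $d\alpha|_\xi$; such a $J_\xi$ exists because the space of $d\alpha|_\xi$-compatible complex structures is non-empty and contractible. Extend $J_\xi$ to $W$ by declaring $J\del_t:=R_\alpha$, $JR_\alpha:=-\del_t$, and $J|_\xi:=J_\xi$, independently of $t$. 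Using the splitting $v=a\del_t+bR_\alpha+w$ with $w\in\xi$, one first obtains the key identity $dt\circ J=-\alpha$, and then
\[
\omega(v,Jv)\;=\;e^t\bigl(a^2+b^2+d\alpha(w,J_\xi w)\bigr)\;>\;0 \quad \text{for } v\neq 0,
\]
so $J$ is $\omega$-compatible on $W$. At a point of $\del M$, $X=\del_t$ gives $JX=R_\alpha\in T\del M$; for $v\in T\del M$ the vector $v$ has no $\del_t$-component and the same calculation gives $\omega(v,R_\alpha)=0$; and $\omega(\del_t,R_\alpha)=1$. This establishes $(1)$. Finally, $J$ is extended to all of $M$ as an $\omega$-compatible almost complex structure by exploiting the contractibility of $\CL{J}(M,\omega)$ already cited in the paper.

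For $(2)$ and $(3)$ I would seek $f=f(t)$ on $W$. The identity $dt\circ J=-\alpha$ gives $df\circ J=-f'(t)\alpha$, so
\[
-d(df\circ J)\;=\;d\bigl(f'(t)\alpha\bigr)\;=\;f''(t)\,dt\wedge\alpha + f'(t)\,d\alpha.
\]
Matching coefficients with $\Phi^*\omega=e^t\,dt\wedge\alpha+e^t\,d\alpha$ forces $f'(t)=e^t$ (which automatically gives $f''(t)=e^t$), and the normalization $f(0)=0$ yields the unique solution $f(t)=e^t-1$. On $W$ this function is smooth, takes values in $(e^{-\varepsilon}-1,0]$, and satisfies $f^{-1}(0)\cap W=\del M$. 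I would then extend $f$ to all of $M$ by a standard cut-off, interpolating between $e^t-1$ on a slightly smaller collar and a strictly negative constant outside $W$, which preserves $(3)$ after shrinking $\varepsilon$; properness is automatic because $M$ is compact. The main technical obstacle is the verification of the compatibility computation and the algebraic identity $dt\circ J=-\alpha$, which rely on carefully exploiting the splitting $TW=\RR\del_t\oplus\RR R_\alpha\oplus\xi$ and the Reeb identity $\iota_{R_\alpha}d\alpha=0$; the remaining extension steps for $J$ and $f$ are routine.
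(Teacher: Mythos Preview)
The paper does not supply a proof of this lemma; it simply cites \cite[Lemma~9.2.7]{MS3}. Your argument is the standard one underlying that reference: identify a collar with a slice of the symplectization of $(\del M,\alpha)$ via the Liouville flow, build $J$ on the collar from a $d\alpha|_\xi$-compatible $J_\xi$ together with the prescription $J\del_t=R_\alpha$, read off the key identity $dt\circ J=-\alpha$, and solve $f'(t)=e^t$. The computations you sketch are correct, including the verification of item~$(1)$ and the matching for item~$(3)$. One small point you pass over: compatibility requires not only $\omega(v,Jv)>0$ but also the symmetry $\omega(Jv,Jw)=\omega(v,w)$; this follows from the same splitting computation and the $d\alpha|_\xi$-compatibility of $J_\xi$, but you should state it. Otherwise your proposal is a faithful expansion of the cited result.
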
\label{Lemma: set of adopted is connected}

Such an almost complex structure $J$ is said to be \textit{adapted to the boundary} or \textit{\emph{contact}}, and such a function $f$ is called \textit{plurisubharmonic}.   The set of $\omega$-compatible  almost complex structures that are adapted to the boundary will be denoted by $\CL{J}(M,\del M,\omega)$. According to the lemma~\ref{Lemma:convex boundary has Stein nieghborhood} and \cite[ Remark $4.1.2$]{BPS}, or \cite[ discussion on the page $106$ ]{CFH} we have the following

\begin{lem}
Let  $(M,\omega)$ be a $2n$-dimensional compact convex symplectic manifold. Then the  set $\CL{J}(M,\del M,\omega)$ of $\omega$-compatible  almost complex structures that are adapted to the boundary is non-empty and connected.
\end{lem}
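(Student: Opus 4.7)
Non-emptiness of $\CL{J}(M, \del M, \omega)$ is immediate from Lemma~\ref{Lemma:convex boundary has Stein nieghborhood}, which already produces an adapted pair $(f, J)$. The substantive claim is connectedness: given $J_0, J_1 \in \CL{J}(M, \del M, \omega)$, the plan is to construct a continuous path $\{J_t\}_{t\in [0,1]}$ in $\CL{J}(M, \del M, \omega)$ joining them. My strategy is to build such a path first on a collar of $\del M$, where the adapted condition is essentially rigid, and then extend it into the interior, where only $\omega$-compatibility needs to be respected.

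The first step is to unpack what ``adapted'' pins down near $\del M$. At a boundary point $x$, the three equations in part (1) of Lemma~\ref{Lemma:convex boundary has Stein nieghborhood} force $JX$ to coincide with the Reeb vector field of the contact form $\alpha := \iota_X \omega|_{\del M}$, and force $J$ to preserve the contact distribution $\xi = \ker\alpha$ with $J|_\xi$ compatible with $d\alpha|_\xi$. On the collar $W \cong (-\varepsilon, 0] \times \del M$ supplied by part (3), the cylindrical identity $\omega = -d(df \circ J)$ together with invariance along the Liouville flow of $X$ reduces an adapted $J$ on $W$ to the single datum of a $d\alpha$-compatible complex structure on the symplectic vector bundle $(\xi, d\alpha) \to \del M$.

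Applying \cite[Proposition~$4.1, (i)$]{MS} fiberwise over $\del M$, the space of $d\alpha$-compatible complex structures on $(\xi, d\alpha)$ is contractible, and in particular path-connected. I can therefore connect $J_0|_\xi$ to $J_1|_\xi$ through such a family $\{K_t\}$, and, via the rigid reconstruction described above, lift it to a path $\{J_t^W\}$ of adapted $\omega$-compatible almost complex structures on a closed subcollar $W' \subset W$ interpolating between $J_0|_{W'}$ and $J_1|_{W'}$.

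The remaining step, and the main obstacle, is extending $\{J_t^W\}$ to a family $\{J_t\} \subset \CL{J}(M, \omega)$ that agrees with $J_0$ at $t=0$ and $J_1$ at $t=1$ on all of $M$. For this I would invoke the relative version of the contractibility of $\CL{J}(M, \omega)$: two $\omega$-compatible structures on $M$ together with a compatible homotopy between them on a closed subset can be extended to a compatible homotopy on all of $M$. This follows from the pointwise fact that $\CL{J}(T_xM, \omega_x) \cong Sp(2n)/U(n)$ is contractible, together with the standard partition-of-unity argument in the proof of \cite[Proposition~$4.1$]{MS}. The subtle point is that a convex combination of $\omega$-compatible structures is only $\omega$-\textit{tame} in general, so one must compose the interpolation with the canonical polar-decomposition retraction from the (open, convex) space of $\omega$-tame structures onto the compatible ones. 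Once this relative extension is carried out, the resulting path lies in $\CL{J}(M, \del M, \omega)$, which proves connectedness.
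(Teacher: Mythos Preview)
Your argument is correct and follows the standard route; it is in fact considerably more detailed than the paper's own treatment, which offers no proof and simply invokes Lemma~\ref{Lemma:convex boundary has Stein nieghborhood} together with \cite[Remark~$4.1.2$]{BPS} and \cite[p.~$106$]{CFH}. Those references carry out essentially the strategy you sketch: the adapted condition at $\partial M$ pins down $JX$ as the Reeb field and leaves only a choice of $d\alpha$-compatible complex structure on the contact bundle $\xi$, the space of such choices is contractible, and the relative form of the contractibility of $\CL{J}(M,\omega)$ handles the extension to the interior.

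One small point to tighten. You write that on the collar an adapted $J$, ``together with invariance along the Liouville flow of $X$,'' reduces to the single datum $J|_\xi$. But the conditions in Lemma~\ref{Lemma:convex boundary has Stein nieghborhood} constrain $J$ only at points of $\partial M$ (condition~(1)) and through the existence of some plurisubharmonic $f$ on some collar (condition~(3)); they do not force $J$ to be Liouville-invariant there. Consequently your cylindrical lift $J_t^W$ built from $K_t$ need not agree with $J_0|_{W'}$ or $J_1|_{W'}$ away from $\partial M$, so the relative extension does not apply directly at the endpoints. The standard remedy---implicit in the cited references---is to first homotope each $J_i$, through adapted $\omega$-compatible structures fixed at $\partial M$, to one that \emph{is} cylindrical on a smaller collar (again using contractibility of the fibrewise compatible structures plus your polar retraction), and then run your interpolation. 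With that adjustment the proof is complete.
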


The next lemma (see also \cite[ Lemma $9.2.9$]{MS3}) and its corollaries are crucial for  the study of $J$-holomorphic spheres. Let us denote by $\Delta:=\del^2_s+\del^2_t$ the standard Laplacian.

\begin{lem}\label{lemma:J-hol curves are subharmonic}
Let  $J\in\CL{J}(M,\omega)$, $\Omega\subseteq\CC$ be an open set, $u:\Omega\to M$ be a $J$-holomorphic curve, and $f:M\to\RR$ is a smooth function such that $\omega=-d(df\circ J)$ on a neighborhood of the image of $u$. Then
$$\Delta(f\circ u)=\omega(\del_su,J\del_su)$$
and hence $f\circ u$ is subharmonic, i.e.  $\Delta(f\circ u)\geq 0$.
\end{lem}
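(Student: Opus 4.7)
The plan is a direct local computation on $\Omega\subseteq\CC$, using the Cauchy--Riemann type equation $\partial_t u=J\partial_s u$ to turn the identity $\omega=-d(df\circ J)$ into a pointwise formula for $\Delta(f\circ u)$.

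First I would introduce the $1$-form $\lambda:=df\circ J$ on a neighborhood of the image of $u$, so that by hypothesis $\omega=-d\lambda$ there; pulling back gives $u^{*}\omega=-d(u^{*}\lambda)$. I would then compute $u^{*}\lambda$ explicitly in the coordinates $(s,t)$ on $\Omega$: since $u^{*}\lambda=\lambda(\partial_s u)\,ds+\lambda(\partial_t u)\,dt=df(J\partial_s u)\,ds+df(J\partial_t u)\,dt$, the $J$-holomorphicity condition $\partial_t u=J\partial_s u$ (together with $J^2=-\Id$) turns this into
\[
u^{*}\lambda=df(\partial_t u)\,ds-df(\partial_s u)\,dt=\partial_t(f\circ u)\,ds-\partial_s(f\circ u)\,dt.
\]

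Next I would differentiate this once more. A routine computation yields
\[
d(u^{*}\lambda)=-\bigl(\partial_s^{2}(f\circ u)+\partial_t^{2}(f\circ u)\bigr)\,ds\wedge dt=-\Delta(f\circ u)\,ds\wedge dt,
\]
so that $u^{*}\omega=\Delta(f\circ u)\,ds\wedge dt$. On the other hand, evaluating $\omega$ directly gives $u^{*}\omega=\omega(\partial_s u,\partial_t u)\,ds\wedge dt=\omega(\partial_s u,J\partial_s u)\,ds\wedge dt$, and comparing the two expressions for $u^{*}\omega$ produces the desired identity $\Delta(f\circ u)=\omega(\partial_s u,J\partial_s u)$. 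Subharmonicity is then immediate, since $J\in\CL{J}(M,\omega)$ means $g_J:=\omega(\cdot,J\cdot)$ is a Riemannian metric, whence $\omega(\partial_s u,J\partial_s u)=g_J(\partial_s u,\partial_s u)=|\partial_s u|_{g_J}^{2}\ge 0$.

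There is no real obstacle: the argument is purely local and the only ingredients are the defining relation $\omega=-d(df\circ J)$, the $J$-holomorphic equation, and tautological facts about $\omega$-compatible almost complex structures. The only thing requiring minor care is bookkeeping of the signs when pulling back $\lambda$ and taking $d$, which is why I would explicitly use $J\partial_s u=\partial_t u$ and $J\partial_t u=-\partial_s u$ as above rather than try to invoke the identity in a coordinate-free form.
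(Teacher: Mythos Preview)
Your argument is correct: the pullback computation of $u^{*}\lambda$ via $\partial_t u=J\partial_s u$ and $J\partial_t u=-\partial_s u$, the identification $u^{*}\omega=\Delta(f\circ u)\,ds\wedge dt$, and the final appeal to $\omega$-compatibility are all fine, and your sign bookkeeping checks out. The paper itself does not supply a proof of this lemma; it merely refers to \cite[Lemma~9.2.9]{MS3}, and your computation is exactly the standard one found there.
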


Let  $(M,\omega)$ be a  compact convex symplectic manifold. We have the following

\begin{cor}\label{corollary:J-curvrs cannot touch the boundary}(See \cite[ Corollary $9.2.10$]{MS3}.)\\
Let $(\Sigma,j)$ be a connected closed Riemann surface, $W\subset M$ be an open neighborhood of $\del M$, and $u:\Sigma\to M$ be a smooth map, whose restriction to $u^{-1}(W)$ is $J$-holomorphic for some $J\in\CL{J}(M,\del M,\omega)$. Then $u(\Sigma)\cap\del M\neq\varnothing$ if and only if $u(\Sigma)\subset\del M$.
\end{cor}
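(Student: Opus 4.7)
The natural approach is the strong maximum principle for subharmonic functions, applied to the plurisubharmonic function $f$ supplied by Lemma~\ref{Lemma:convex boundary has Stein nieghborhood}. The plan is to show that the set $S:=\{z\in\Sigma\mid u(z)\in\del M\}=(f\circ u)^{-1}(0)$ is both open and closed in $\Sigma$; then connectedness of $\Sigma$ together with $S\neq\varnothing$ yields $S=\Sigma$, which is the nontrivial direction (the reverse implication is vacuous because $\Sigma\neq\varnothing$).

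First I would invoke Lemma~\ref{Lemma:convex boundary has Stein nieghborhood} to choose $f:M\to(-\infty,0]$ with $\del M=f^{-1}(0)$ together with a collar neighborhood $W_0\cong(-\ve,0]\times\del M$ on which $\omega=-d(df\circ J)$. After replacing the given neighborhood $W$ by $W\cap W_0$, which is still open and still contains $\del M$, I may assume that the identity $\omega=-d(df\circ J)$ is valid on $W$. Closedness of $S$ is then immediate from continuity of $f\circ u$.

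For openness, fix $z_0\in S$. Since $u(z_0)\in\del M\subset W$ and $u$ is continuous, there is a connected open neighborhood $U\subset\Sigma$ of $z_0$ with $u(U)\subset W$, on which $u$ is $J$-holomorphic by hypothesis. Lemma~\ref{lemma:J-hol curves are subharmonic} then implies that $f\circ u$ is subharmonic on $U$. Because $f\leq 0$ everywhere on $M$ and $(f\circ u)(z_0)=0$, the function $f\circ u$ attains an interior maximum on $U$ at $z_0$. The strong maximum principle for subharmonic functions on a connected planar domain forces $f\circ u\equiv 0$ on $U$, which means $u(U)\subset\del M$, i.e.\ $U\subset S$. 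Hence $S$ is open, and we are done.

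The only subtle point is guaranteeing that the identity $\omega=-d(df\circ J)$ holds along $u(U)$ so that Lemma~\ref{lemma:J-hol curves are subharmonic} is actually applicable; this is precisely what the preliminary shrinking of $W$ into the Liouville collar arranges. Beyond that, the argument is a standard topological bootstrap driven by the maximum principle, and I do not anticipate any genuine obstacles.
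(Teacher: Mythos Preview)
Your argument is correct and is precisely the standard maximum-principle proof; the paper itself does not spell out a proof but simply refers to \cite[Corollary~9.2.10]{MS3}, whose argument is the one you give. The only cosmetic point is that Lemma~\ref{lemma:J-hol curves are subharmonic} is stated for domains $\Omega\subseteq\CC$, so when you pick the neighborhood $U\subset\Sigma$ you should take it to lie inside a single holomorphic coordinate chart (which your phrase ``connected planar domain'' indicates you have in mind); with that understood there is nothing to add.
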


\begin{cor}\label{corollary:J-spheres are bounded away from the boundary}
Let  $W$ be an open neighborhood of $\del M$,  such that  $\omega=-d(df\circ J)$ on $W$, where the pair $(f,J),$ satisfies the conditions of lemma~\ref{Lemma:convex boundary has Stein nieghborhood}. Then for any non-constant $J$-holomorphic sphere  $u:\mathbb{S}^2\to M$ we have  $u(\mathbb{S}^2)\subset M\minus W$.
\end{cor}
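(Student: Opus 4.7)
Assume for contradiction that $V := u^{-1}(W)\subset \mathbb{S}^2$ is non-empty, and fix a connected component $V_0$ of $V$. Since $u(V_0)\subset W$, Lemma~\ref{lemma:J-hol curves are subharmonic} applies on $V_0$ and gives
\[
\Delta(f\circ u) \;=\; \omega(\del_s u, J\del_s u) \;=\; g_J(\del_s u, \del_s u) \;\ge\; 0,
\]
so that $f\circ u$ is subharmonic on $V_0$. Moreover, the equality $\Delta(f\circ u)(z)=0$ is equivalent to $\del_s u(z)=0$, which via the Cauchy--Riemann equation $\del_t u = J\del_s u$ is the same as $du(z)=0$. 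This equivalence will be what lets me upgrade constancy of $f\circ u$ to constancy of $u$.

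Because $\bar V_0 \subset \mathbb{S}^2$ is compact, the continuous function $f\circ u$ attains its maximum over $\bar V_0$ at some point $z_1\in\bar V_0$, and I would split into two cases. If $z_1$ lies in the interior of $V_0$, the strong maximum principle for subharmonic functions on the connected open set $V_0$ forces $f\circ u$ to be constant on $V_0$. Then $\Delta(f\circ u)\equiv 0$ on $V_0$ and hence $du\equiv 0$ on $V_0$, so $u$ is constant on $V_0$. Aronszajn's unique continuation theorem for $J$-holomorphic maps then propagates this to all of the connected sphere $\mathbb{S}^2$, contradicting the hypothesis that $u$ is non-constant.

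If instead $z_1\in\del V_0 = \bar V_0\setminus V_0$, then $u(z_1)\in\bar W\setminus W$ (the maximality of $V_0$ as a component of the open set $V$ prevents $u(z_1)\in W$). Using the collar structure $W\cong(-\ve,0]\times\del M$ from Lemma~\ref{Lemma:convex boundary has Stein nieghborhood}, in the standard normalization $f$ equals the collar coordinate, so $W = \{f>-\ve\}$ and $\bar W\setminus W\subset\{f\le-\ve\}$; hence $f(u(z_1))\le -\ve$. But for every $z\in V_0$ one has $u(z)\in W = \{f>-\ve\}$ and therefore $f\circ u(z) > -\ve \ge f(u(z_1))$, contradicting the choice of $z_1$ as a maximizer of $f\circ u$ on $\bar V_0$.

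In both cases we reach a contradiction, so $V=\varnothing$, which is the claim $u(\mathbb{S}^2)\subset M\setminus W$. The main nontrivial input beyond the strong maximum principle is Aronszajn's unique continuation theorem for $J$-holomorphic maps, which is what allows the jump from local constancy of $u$ on the open set $V_0$ to global constancy on $\mathbb{S}^2$; the rest is a direct combination of the plurisubharmonicity from Lemma~\ref{lemma:J-hol curves are subharmonic} with the geometry of the Stein collar.
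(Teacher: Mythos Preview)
Your argument is correct, and it follows the same route the paper implicitly has in mind (the corollary is stated without proof, as an immediate consequence of Lemma~\ref{lemma:J-hol curves are subharmonic}). One remark, though: the appeal to Aronszajn's unique continuation theorem is unnecessary, and you can excise it with what you have already written.

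Your own Case~2 shows that whenever $\partial V_0\neq\varnothing$, the maximum of $f\circ u$ on $\bar V_0$ cannot lie on $\partial V_0$; hence you are forced into Case~1, where $f\circ u$ is constant on $V_0$ and thus $u$ is constant on $V_0$, say $u\equiv p\in W$. But then by continuity $u(\partial V_0)=\{p\}\subset W$, while you have already argued that $u(\partial V_0)\subset \bar W\setminus W$; this is a contradiction without ever leaving $V_0$. If instead $\partial V_0=\varnothing$, then $V_0=\mathbb{S}^2$ and $du\equiv 0$ on all of $\mathbb{S}^2$ directly, so again no continuation is needed. Either way, the maximum principle alone suffices, which is the argument the paper intends.
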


On can also consider the solutions of the perturbed Cauchy-Riemann equation $$\cl{\del}_J(u)=\nu(u),$$ where an inhomogeneous term $\nu(u)\in\Omega_J^{0,1}(\Sigma,u^*TM)$ is Hamiltonian with compact support. Recall, if  $(M,\omega)$ is a symplectic manifold and $H\in\CL{C}^{\infty}(M)$ is a smooth function on $M$, called a \textit{Hamiltonian}, one defines the smooth  vector field $X_H\in\Gamma(M,TM)$ on $M$ by the formula $$\omega(X_H,\cdot)=-dH(\cdot).$$ Such a vector field is called a \textit{Hamiltonian vector field}, and the space of Hamiltonian vector fields on $M$ will be denoted by $\Gamma_{\Ham}(M,TM)$.

Let  $(M,\omega)$ be a  compact convex symplectic manifold. Denote by $\CL{C}^{\infty}_c(M)\subseteq\CL{C}^{\infty}(M)$ the space of compactly supported Hamiltonians, i.e., $$H\in\CL{C}^{\infty}_c(M)\IFF\mathrm{supp}(H)\Subset\overset{\circ}{M}:=M\minus\del M.$$  Suppose we have a $1$-form
$$\KL{H}\in\Omega^1\(\Sigma,\CL{C}^{\infty}_c(M)\):=\Gamma\(\Sigma,\mathrm{Hom}\(T\Sigma,\CL{C}^{\infty}_c(M)\)\)$$ on $\Sigma$ with values in the linear space $\CL{C}^{\infty}_c(M)$.  Define the support of $\KL{H}$ to be
$$\mathrm{supp}\(\KL{H}\):=\bigcup\limits_{z\in\Sigma,\ \xi\in T_z\Sigma}\mathrm{supp}\(\KL{H}(z)(\xi)\).$$  Denote by $$\Omega_c^1\(\Sigma,\CL{C}^{\infty}_c(M)\)\subseteq\Omega^1\(\Sigma,\CL{C}^{\infty}_c(M)\)$$ the subspace of compactly supported $1$-forms, i.e., $$\KL{H}\in\Omega_c^1\(\Sigma,\CL{C}^{\infty}_c(M)\)\ \IFF\ \mathrm{supp}\(\KL{H}\)\Subset\overset{\circ}{M}.$$
Such a $1$-form gives rise to the $1$-form $$\CL{X}_{\Kl{H}}\in\Omega^1\(\Sigma,\Gamma_{\Ham}(M,TM)\):=\Gamma\(\Sigma,\mathrm{Hom}\(T\Sigma,\Gamma_{\Ham}(M,TM)\)\)$$  on $\Sigma$ with values in the linear space of Hamiltonian vector fields. It is given by $$T_z\Sigma\ni\xi\mapsto\CL{X}_{\Kl{H}}(z)(\xi):=X_{\Kl{H}(z)(\xi)}\in\Gamma_{\Ham}(M,TM),$$ and its support satisfies
$$\mathrm{supp}\(\CL{X}_{\Kl{H}}\):=\bigcup\limits_{z\in\Sigma,\ \xi\in T_z\Sigma}\mathrm{supp}\(X_{\Kl{H}(z)(\xi)}\)\Subset\overset{\circ}{M}.$$
Given a smooth map $u:\Sigma\to M$, denote by $$\CL{X}_{\Kl{H}}(u)\in\Omega^1(\Sigma,u^*TM):=\Gamma\(\Sigma,\mathrm{Hom}\(T\Sigma,u^*TM\)\)$$ the $1$-form along $u$ with values in the pullback tangent bundle. It is given by $$T_z\Sigma\ni\xi\mapsto\CL{X}_{\Kl{H}}(u)(z)(\xi):=X_{\Kl{H}(z)(\xi)}(u(z))\in T_{u(z)}M.$$
Finally, fix  $J\in\CL{J}(M,\del M,\omega)$ and define the inhomogeneous term $\nu$ by $$\nu:=-\(\CL{X}_{\Kl{H}}\)^{0,1}\;\;\text{and}\;\;\nu(u):=-\(\CL{X}_{\Kl{H}}(u)\)^{0,1},$$ where $\(\CL{X}_{\Kl{H}}\)^{0,1}$ and  $\(\CL{X}_{\Kl{H}}(u)\)^{0,1}$ denote the $J$-complex anti-linear parts of
$\CL{X}_{\Kl{H}}$ and $\CL{X}_{\Kl{H}}(u)$ respectively.
Then the perturbed Cauchy-Riemann equation has the form
\beq\label{equation:perturbed C-R}
\cl{\del}_{J,\Kl{H}}(u):=\cl{\del}_J(u)+\(\CL{X}_{\Kl{H}}(u)\)^{0,1}=0
\eeq
A  smooth map $u:\Sigma\to M$ that satisfies Equation~\eqref{equation:perturbed C-R} will be called  $(J,\KL{H})$-holomorphic curve.
Note that the form $\cl{\del}_{J,\Kl{H}}(u)$ is the $J$-complex anti-linear part of the covariant derivative $$d_{\Kl{H}}(u):=du+\CL{X}_{\Kl{H}}(u).$$ Since $\cl{\del}_{J,\Kl{H}}(u)=\cl{\del}_{J}(u)$ on $M\minus\mathrm{supp}\(\KL{H}\)$ we have the following corollary.

\begin{cor}\label{cor:perturbed J-spheres are bounded away from the boundary}
Let  $W$ be an open neighborhood of $\del M$, such that  $\omega=-d(df\circ J)$ on $W$, where the pair $(f,J),$ satisfies the conditions of Lemma~\ref{Lemma:convex boundary has Stein nieghborhood}. Suppose that  $\KL{H}\in\Omega_c^1\(\mathbb{S}^2,\CL{C}^{\infty}_c(M)\)$ such that $W\subseteq M\minus\mathrm{supp}\(\KL{H}\)$.  Then for any non-constant $(J,\KL{H})$-holomorphic sphere  $u:\mathbb{S}^2\to M$ we have  $u(\mathbb{S}^2)\subset M\minus W$.
\end{cor}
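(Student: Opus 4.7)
The plan is to reduce to the unperturbed setting of Corollary~\ref{corollary:J-spheres are bounded away from the boundary} by exploiting the disjointness hypothesis $W \subseteq M \setminus \mathrm{supp}(\KL{H})$.

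The first step is to observe that $u$ is genuinely $J$-holomorphic on the open set $\Omega := u^{-1}(W) \subseteq \mathbb{S}^2$. For any $z \in \Omega$ and any $\xi \in T_z\mathbb{S}^2$, the Hamiltonian $\KL{H}(z)(\xi) \in \CL{C}^{\infty}_c(M)$ is supported in $M \setminus W$ by hypothesis, hence vanishes identically on a neighborhood of $u(z) \in W$; in particular $X_{\Kl{H}(z)(\xi)}(u(z)) = 0$, and so $\CL{X}_{\Kl{H}}(u)|_\Omega \equiv 0$. The perturbed equation~\eqref{equation:perturbed C-R} therefore collapses on $\Omega$ to $\overline{\partial}_J u = 0$, so that $u|_\Omega \colon \Omega \to W$ is a bona fide $J$-holomorphic map.

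With this reduction in hand, the very same argument underlying Corollary~\ref{corollary:J-spheres are bounded away from the boundary} applies. Since $\omega = -d(df \circ J)$ on $W$, Lemma~\ref{lemma:J-hol curves are subharmonic} applied to $u|_\Omega$ gives $\Delta(f \circ u) = \omega(\partial_s u, J \partial_s u) \geq 0$ on $\Omega$, so $f \circ u$ is subharmonic there. Supposing $\Omega \neq \varnothing$ for contradiction, the strong maximum principle (combined with the observation that $f$ is bounded strictly below $\sup_W f = 0$ on the topological boundary of $W$ inside $M$, which forces an interior extremum on a component of $\Omega$) forces $f \circ u$ to be locally constant somewhere on $\Omega$. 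Coupling this with $J$-holomorphicity on $\Omega$ yields a non-empty open subset of $\mathbb{S}^2$ on which $u$ is constant.

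The final step, upgrading local constancy to global constancy of $u$ on $\mathbb{S}^2$, is the main technical obstacle. It is handled by the Aronszajn--Carleman unique continuation principle for the semilinear Cauchy--Riemann system~\eqref{equation:perturbed C-R}. The disjointness hypothesis $W \subseteq M \setminus \mathrm{supp}(\KL{H})$ is essential precisely here: it guarantees that the constant map $z \mapsto u(z_0)$, with $u(z_0) \in W$, is itself a solution of~\eqref{equation:perturbed C-R}, since $\CL{X}_{\Kl{H}}$ vanishes at every point of $W$. Unique continuation applied to the two solutions $u$ and $z \mapsto u(z_0)$, which coincide on a non-empty open set, then propagates the coincidence to the entire connected surface $\mathbb{S}^2$, contradicting non-constancy of $u$ and forcing $\Omega = \varnothing$, i.e., $u(\mathbb{S}^2) \subseteq M \setminus W$.
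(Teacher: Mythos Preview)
Your argument is correct and follows the paper's approach: the paper justifies the corollary by the single sentence immediately preceding it (``Since $\cl{\del}_{J,\Kl{H}}(u)=\cl{\del}_{J}(u)$ on $M\minus\mathrm{supp}(\KL{H})$ we have the following corollary''), which is precisely your first step, after which the maximum-principle reasoning behind Corollary~\ref{corollary:J-spheres are bounded away from the boundary} is left to the reader. You have simply written out those details.

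One minor remark: your appeal to Aronszajn--Carleman unique continuation for the perturbed system, while valid, is heavier machinery than the situation requires. With $W$ the collar of Lemma~\ref{Lemma:convex boundary has Stein nieghborhood} (so that $M\minus W\subseteq\{f\le -\varepsilon\}$), the global maximum of $f\circ u$ on $\mathbb{S}^2$ must, whenever $\Omega\neq\varnothing$, lie in $\Omega$; if the relevant component of $\Omega$ is proper in $\mathbb{S}^2$, the weak maximum principle already yields a direct contradiction (values on the boundary equal $-\varepsilon$, values in the interior exceed $-\varepsilon$), and if that component is all of $\mathbb{S}^2$, then $u$ is honestly $J$-holomorphic everywhere and constancy follows without any reference to the perturbed equation.
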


Due to Gromov, one can view a perturbed  $(J,\KL{H})$-holomorphic curve \\$u:\Sigma\to M$  as a $\(j_{\Sigma},\widetilde{J}\)$-holomorphic map (section)  $$\widetilde{u}:\Sigma\to\widetilde{M}:=\Sigma\times M,\;\;\;\;\widetilde{u}(z)=(z,u(z)),$$ where the almost complex structure $\widetilde{J}$ on $\widetilde{M}$ is given by $$\widetilde{J}:=\begin{pmatrix} j_{\Sigma}&0\\&\\-\(\CL{X}_{\Kl{H}}\)^{0,1}\circ  j_{\Sigma}&J\end{pmatrix}.$$ We have that the projection $$\pi:\(\widetilde{M},\widetilde{J}\)\to\(\Sigma,j_{\Sigma}\)$$ is $\(\widetilde{J},j_{\Sigma}\)$-holomorphic and for each $z\in\Sigma$, the  fibre
$$(\{z\}\times M, J)\subseteq\(\widetilde{M},\widetilde{J}\)$$ is an almost complex submanifold. Moreover, there exits a symplectic form $\omega_{\Sigma}$ on $\Sigma$, such that $J$ is $\omega$-compatible if and only if $\widetilde{J}$ is $(\omega_{\Sigma}\oplus\omega)$-compatible. We have seen in the Corollary~\ref{cor:perturbed J-spheres are bounded away from the boundary} that for any compactly supported Hamiltonian perturbation $\KL{H}$,  a  $(J,\KL{H})$-holomorphic curve  lies in the complement of some open neighborhood of the boundary $\del M$. Hence, the respective  $\(j_{\Sigma},\widetilde{J}\)$-holomorphic section  lies  in the complement  of some open neighborhood of the boundary $\del \widetilde{M}=\Sigma\times\del M$.

It follows from corollaries \ref{corollary:J-curvrs cannot touch the boundary} and \ref{corollary:J-spheres are bounded away from the boundary} that the Gromov compactness theorem holds for both non-perturbed and perturbed $J$-holomorphic curves. See \cite[ Chapter $4$]{MS3}, \cite[ Section $6$]{PW}, \cite[ Section $3$]{Ruan}, \cite[ Theorem $4.1.1$]{AL}.

\subsection{Moduli spaces of J-holomorphic spheres.}
Let  $(M,\omega)$ be a $2n$-dimensional semi-positive compact convex symplectic manifold and let  $J\in\CL{J}(M,\del M,\omega)$. From the above discussion it follows that one can repeat verbatim the construction of moduli spaces of (perturbed) J-holomorphic spheres. The complete proves can be found in  \cite[ Chapter $3$]{MS3}. Let us just restate the central results.

Recall that a smooth map $u:\mathbb{S}^2=\CC P^1\to M$ is called \textit{simple} if  $u=w\circ\phi$ with meromorphic  $\phi:\CC P^1\to\CC P^1$ implies $\mathrm{deg}\ \phi =1$. Given a class $A\in H_2^S(M)$, denote by $\CL{M}(A,J)$ the set of all $J$-holomorphic spheres, which represent the class $A$. Denote also  by $\CL{M}_s(A,J)\subseteq\CL{M}(A,J)$ the subset of simple $J$-holomorphic spheres, which represent the class $A$.

\begin{thm}\label{thm:moduli space of simple curves is a manifold}
There exists a subset $\CL{J}_{reg}:=\CL{J}_{reg}(A)\subset\CL{J}(M,\del M,\omega)$ of the second Baire category in $\CL{C}^{\infty}$-topology of regular almost complex structures, such that for any $J\in\CL{J}_{reg}$ the space $\CL{M}_s(A,J)$ is a smooth manifold of dimension $$\dim \CL{M}_s(A,J)=2n+2I_c(A).$$ It carries a canonical orientation. For different regular almost complex structures, the moduli spaces are oriented cobordant.
\end{thm}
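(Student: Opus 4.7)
The plan is to adapt the standard transversality argument for moduli spaces of simple $J$-holomorphic spheres in closed symplectic manifolds (see \cite[Chapter 3]{MS3}) to the convex compact setting, exploiting the fact that, by Corollary~\ref{corollary:J-spheres are bounded away from the boundary}, every non-constant $J$-holomorphic sphere is contained in a fixed compact subset $M\minus W\subset\overset{\circ}{M}$, so that perturbations of $J$ supported in the interior suffice.

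First I would set up the universal moduli space as a Banach manifold. Fix $k\geq 1$ and $p>2$ with $kp>2$, and introduce the Banach manifold $\CL{B}^{k,p}(A)$ of $W^{k,p}$-maps $u:\mathbb{S}^2\to M$ representing $A$, together with the Banach manifold of $\CL{C}^\ell$-smooth $\omega$-compatible almost complex structures adapted to the boundary, denoted $\CL{J}^\ell(M,\del M,\omega)$ (large $\ell$). Consider the Banach bundle $\CL{E}\to\CL{B}^{k,p}(A)\times\CL{J}^\ell(M,\del M,\omega)$ whose fiber over $(u,J)$ is $W^{k-1,p}\(\mathbb{S}^2,\Omega^{0,1}(u^*TM)\)$, and the section $\CL{F}(u,J)=\cl{\del}_J(u)$. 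The first key point is to verify that $\CL{F}$ is transverse to the zero section at every simple pair $(u,J)$; since Corollary~\ref{corollary:J-spheres are bounded away from the boundary} ensures that $u(\mathbb{S}^2)\subset M\minus W$, the set of injective points of $u$ lies in $\overset{\circ}{M}$, and one can therefore construct admissible perturbations $Y\in T_J\CL{J}^\ell(M,\del M,\omega)$ supported in the interior that hit any prescribed element in the cokernel of the linearized Cauchy-Riemann operator $D_u$ --- this is the precise point where the relative nature of the problem is harmless.

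Next I would invoke the implicit function theorem to conclude that the universal moduli space $\tKL{M}_s(A):=\CL{F}^{-1}(0)\cap\{(u,J):u\text{ simple}\}$ is a Banach manifold, and apply the Sard--Smale theorem to the projection $\pi:\tKL{M}_s(A)\to\CL{J}^\ell(M,\del M,\omega)$ to obtain a residual subset $\CL{J}^\ell_{reg}(A)$. A standard Taubes-style argument (intersecting decreasing $\CL{C}^\ell$-residual sets and using the openness of regularity within any fixed compact family) then upgrades this to a residual subset $\CL{J}_{reg}(A)\subset\CL{J}(M,\del M,\omega)$ of the second Baire category in the $\CL{C}^\infty$-topology. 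For $J\in\CL{J}_{reg}(A)$, the moduli space $\CL{M}_s(A,J)=\pi^{-1}(J)\cap\tKL{M}_s(A)$ is a smooth manifold, and an index computation for $D_u$ (a Fredholm operator of index $2n+2c_1(A)$) yields the stated dimension $2n+2I_c(A)$.

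For the orientation, I would use the fact that the determinant line bundle of the family of linearized Cauchy-Riemann operators over $\CL{B}^{k,p}(A)$ is trivial, and that any choice of trivialization near a complex linear operator (e.g. at an integrable $J$) determines a canonical orientation on the whole component, independent of choices. For the cobordism statement, given two regular $J_0,J_1\in\CL{J}_{reg}(A)$, I would join them by a generic smooth path $\{J_t\}_{t\in[0,1]}$ inside $\CL{J}(M,\del M,\omega)$ (which is non-empty and connected by Lemma~\ref{Lemma: set of adopted is connected}), apply the same transversality argument to the parametric section $\cl{\del}_{\{J_t\}}$, and obtain a smooth oriented cobordism $\bigcup_t\{t\}\times\CL{M}_s(A,J_t)$ between $\CL{M}_s(A,J_0)$ and $\CL{M}_s(A,J_1)$.

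The main obstacle is the transversality step: one must perturb $J$ while remaining in the class $\CL{J}(M,\del M,\omega)$, which imposes pointwise conditions on $J$ along $\del M$ (cf.\ Lemma~\ref{Lemma:convex boundary has Stein nieghborhood}). The essential observation that makes the whole argument go through is Corollary~\ref{corollary:J-spheres are bounded away from the boundary}: because every non-constant $J$-holomorphic sphere avoids the collar $W$, we may restrict our perturbations to have support in $M\minus W$, where the constraints defining $\CL{J}(M,\del M,\omega)$ are vacuous, and we recover exactly the freedom available in the closed case. Semi-positivity is not used here but is needed in the compactification arguments of the following sections.
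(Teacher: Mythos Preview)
Your proof plan is correct and follows precisely the approach the paper itself takes: the paper does not give an independent proof of this theorem but explicitly says that, thanks to Corollaries~\ref{corollary:J-curvrs cannot touch the boundary} and~\ref{corollary:J-spheres are bounded away from the boundary}, one can repeat verbatim the construction of \cite[Chapter~3]{MS3}, and simply restates the result. You have spelled out exactly that verbatim argument, correctly isolating the one new point --- that the boundary-adaptedness constraints on $J$ are vacuous on $M\setminus W$, where all non-constant $J$-holomorphic spheres live, so interior-supported perturbations of $J$ within $\CL{J}(M,\del M,\omega)$ are enough for transversality.
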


It follows from \cite[Proposition $2.5.1$]{MS3}) that the group of M\"{o}bius transformations $G:=PSL_2(\CC)$ acts freely on $\CL{M}_s(A,J)$ by $\phi\cdot u:=u\circ\phi^{-1}$ for every $\phi\in G$. Hence, for any $J\in\CL{J}_{reg}$ the quotient $\CL{C}_s(A,J):=\CL{M}_s(A,J)\big/ G$ is a finite dimensional manifold of dimension
$$\mathrm{dim}\ \CL{C}_s(A,J)=2n+2I_c(A)-6.$$ In particular, it is empty whenever $I_c(A)<3-n$.

In order to deal with multiply covered spheres  one can consider  moduli spaces of perturbed J-holomorphic spheres. In the closed case it was done by Y. Ruan in \cite[ Section $3$]{Ruan}. A similar construction can be found in the monograph \cite[ Chapter $8$]{MS3}. We shall consider  the Hamiltonian perturbations with compact support. For any almost complex structure $J\in\CL{J}(M,\del M,\omega)$, any Hamiltonian perturbation with compact support  $\KL{H}\in\Omega_c^1\(\mathbb{S}^2,\CL{C}^{\infty}_c(M)\)$ and any class $A\in H_2^S(M)$, denote by $\CL{M}(A,J,\KL{H})$ the set of all  $(J,\KL{H})$-holomorphic spheres in $M$, which represent the class $A$, i.e.
$$\CL{M}(A,J,\KL{H})=\{u\in\CL{C}^{\infty}(\mathbb{S}^2,M)|\ \cl{\del}_{J,\Kl{H}}(u)=0,\ [u]=A\}.$$

\begin{thm}\label{thm:moduli of perturbed spheres is a manifold}(See \cite[ Theorems $3.2.2$ and $3.2.8$]{Ruan}.)\\
There exists a subset  $\Omega_{c,\mathrm{reg}}^1\(\mathbb{S}^2,\CL{C}^{\infty}_c(M)\)\subseteq \Omega_c^1\(\mathbb{S}^2,\CL{C}^{\infty}_c(M)\)$ of the second category in the $\CL{C}^{\infty}$-topology, such that for any $\KL{H}\in\Omega_{c,\mathrm{reg}}^1\(\mathbb{S}^2,\CL{C}^{\infty}_c(M)\)$ the moduli space $\CL{M}(A,J,\KL{H})$ is a smooth manifold of dimension $$\dim\CL{M}(A,J,\KL{H})=2n+2I_c(A).$$ It carries a canonical orientation. Moreover, for a generic (in the Baire categorical sense) path $$I:=[0,1]\ni t\mapsto(J_t,\KL{H}_t)$$  the set  $$ \CL{K}\(A,(J_t,\KL{H}_t)_{t\in I}\):=\{(t,u)\in I\times\CL{C}^{\infty}(\mathbb{S}^2,\overset{\circ}{M})|\ u\in\CL{M}\(A,J_t,\KL{H}_t\)\}$$  is a smooth oriented manifold of  dimension $$\dim\ \CL{K}\(A,(J_t,\KL{H}_t)_{t\in I}\)=2n+2I_c(A)+1$$ and with boundary $$\del  \CL{K}\(A,(J_t,\KL{H}_t)_{t\in I}\)= \CL{M}(A,J_1,\KL{H}_1)\cup -\CL{M}(A,J_0,\KL{H}_0),$$ where the minus sign denotes the reversed orientation.
\end{thm}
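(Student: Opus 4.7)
The plan is to follow the standard Sard--Smale scheme on a universal moduli space, exactly as in Ruan's original construction, but exploiting Corollary~\ref{cor:perturbed J-spheres are bounded away from the boundary} to reduce the problem on $(M,\partial M)$ to one on the open manifold $\overset{\circ}{M}$ where no boundary issues arise. First I would fix a Sobolev completion: let $\CL{B}^{k,p}:=W^{k,p}(\mathbb{S}^2,\overset{\circ}{M})_A$ be the Banach manifold of maps in class $A$ with $kp>2$, and let $\CL{E}^{k-1,p}\to\CL{B}^{k,p}\times\Omega_c^1(\mathbb{S}^2,\CL{C}^\infty_c(M))$ be the Banach bundle whose fibre over $(u,\KL{H})$ is $W^{k-1,p}(\Lambda^{0,1}T^*\mathbb{S}^2\otimes_J u^*TM)$. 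The assignment $\cl{\del}_{J,\Kl{H}}(u):=\cl{\del}_J u+(\CL{X}_{\Kl{H}}(u))^{0,1}$ defines a smooth section of this bundle, and the universal moduli space $\CL{M}^{\mathrm{univ}}(A,J)$ is its zero set. The compactly supported nature of $\KL{H}$ together with Corollary~\ref{cor:perturbed J-spheres are bounded away from the boundary} guarantees that all solutions in fact land in a fixed compact subset of $\overset{\circ}{M}$, so the analytical setup is identical to the closed case.

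Next I would verify that at each zero $(u,\KL{H})$ the vertical differential $D(u,\KL{H})$ is Fredholm on the $u$-component and surjective on the full $(u,\KL{H})$-domain. Fredholmness and the index formula $2n+2I_c(A)$ come directly from Riemann--Roch applied to the linearization $D_u$, which differs from the classical $\cl{\del}_J$-linearization by a zeroth-order operator. The crucial transversality step is surjectivity of the full universal linearization: given $\eta\in\mathrm{coker}(D_u)$, one must produce a variation $\hat{\KL{H}}$ whose image under the linearization pairs non-trivially with $\eta$. Here the key input is that $\eta$ satisfies an elliptic equation with the unique continuation property, so its zero set has empty interior; then for any $z_0\in\mathbb{S}^2$ at which $\eta(z_0)\neq 0$ one can construct a bump Hamiltonian supported in a small neighborhood of $u(z_0)\in\overset{\circ}{M}$ whose associated variation $(\CL{X}_{\hat{\Kl{H}}}(u))^{0,1}$ pairs non-trivially with $\eta$. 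This is where I expect the main technical work; the point to be checked carefully is that varying $\KL{H}$ gives enough freedom even at multiply covered $u$, which is precisely the advantage of the inhomogeneous perturbation over a pure $J$-variation and is the reason the argument works without a ``simple sphere'' hypothesis. Once surjectivity of the universal linearization is established, the implicit function theorem shows that $\CL{M}^{\mathrm{univ}}(A,J)$ is a Banach manifold, and the Sard--Smale theorem applied to the projection $\pi:\CL{M}^{\mathrm{univ}}(A,J)\to\Omega_c^1(\mathbb{S}^2,\CL{C}^\infty_c(M))$ produces the desired Baire-generic set $\Omega_{c,\mathrm{reg}}^1$ of regular values; the fibre $\pi^{-1}(\KL{H})=\CL{M}(A,J,\KL{H})$ is then a smooth manifold of dimension equal to $\mathrm{ind}(D_u)=2n+2I_c(A)$. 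Elliptic regularity shows that elements are in fact $\CL{C}^\infty$, so the construction is independent of $k,p$.

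Finally, the canonical orientation is obtained from the determinant line bundle of the Fredholm family $\{D_u\}$, which is trivializable because $\mathbb{S}^2$ is simply connected and $(TM,J)$ is a complex bundle, following \cite[Appendix A]{MS3}. For the parametric statement, I would apply exactly the same Sard--Smale argument to the parametrized universal moduli space $$\CL{M}^{\mathrm{univ,par}}(A)=\bigl\{(t,u,J_t,\KL{H}_t)\;\bigl|\;\cl{\del}_{J_t,\Kl{H}_t}(u)=0,\;[u]=A\bigr\},$$ where now the path $(J_t,\KL{H}_t)_{t\in[0,1]}$ is taken as a new parameter; the additional $t$-direction contributes $+1$ to the expected dimension, giving $2n+2I_c(A)+1$. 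Regularity of the endpoints $(J_0,\KL{H}_0),(J_1,\KL{H}_1)$ is preserved by restricting to paths extending given regular endpoints, yielding the boundary identification $\partial\CL{K}=\CL{M}(A,J_1,\KL{H}_1)\sqcup-\CL{M}(A,J_0,\KL{H}_0)$ with compatible orientations induced from the determinant line bundle over the path. Throughout, Gromov compactness ensures that boundary strata from bubbling off occur only in the expected real codimension, and semi-positivity keeps these strata out of the generic picture; combined with the a priori bound keeping maps in $\overset{\circ}{M}$, this guarantees that the constructed manifolds are properly cobordant as claimed.
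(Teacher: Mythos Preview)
The paper does not supply its own proof of this theorem: it merely restates Ruan's result, having already argued (via Corollary~\ref{cor:perturbed J-spheres are bounded away from the boundary} and the preceding discussion) that compactly supported Hamiltonian perturbations force all $(J,\KL{H})$-holomorphic spheres into a fixed compact subset of $\overset{\circ}{M}$, so that the closed-case construction carries over verbatim. Your sketch is precisely the standard Sard--Smale argument on the universal moduli space that underlies Ruan's theorem, and it is correct in outline; in particular you have identified the essential point, namely that the inhomogeneous term gives enough freedom to achieve transversality even at multiply covered maps, which is why no simplicity hypothesis is needed.

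Two remarks. First, a technical point you pass over: the parameter space $\Omega_c^1(\mathbb{S}^2,\CL{C}^\infty_c(M))$ with the $\CL{C}^\infty$-topology is Fr\'echet rather than Banach, so Sard--Smale does not apply directly; one must either work in Floer's $\CL{C}^\varepsilon$-space or use the Taubes device of intersecting the $\CL{C}^l$-regular sets over all $l$ to obtain a set of second category in $\CL{C}^\infty$. Second, your closing sentence about Gromov compactness and semi-positivity controlling bubbling strata is extraneous here: the theorem as stated concerns only the smooth manifold structure of the uncompactified moduli spaces and the parametric cobordism $\CL{K}$, with no compactness claim; the codimension analysis of cusp curves enters only later, in Theorems~\ref{thm:GW-pseudocycle in the simple case} and~\ref{thm:GW-pseudocycle in the perturbed case}.
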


Let us fix a finite number of pairwise distinct points $\mathbf{z}:=(z_1,\ldots,z_m)\in\(\mathbb{S}^2\)^m$. We can consider well-defined evaluation maps
$$\ev_{\mathbf{z},J}:\CL{M}_s(A,J)\to\Big(\overset{\circ}{M}\Big)^m,\;\;\;\ev_{\mathbf{z},J}(u):=(u(z_1),\ldots,u(z_m)),$$
and
$$\mathrm{ev}_{\mathbf{z},J,\Kl{H}}:\CL{M}(A,J,\KL{H})\to\Big(\overset{\circ}{M}\Big)^m,\;\;\;\ev_{\mathbf{z},J,\Kl{H}}(u):=(u(z_1),\ldots,u(z_m)).$$

\noindent Given a smooth submanifold $X\subseteq\overset{\circ}{M}$  define two moduli spaces $$\CL{M}_s(A,J;\mathbf{z},X):=\{u\in\CL{M}_s(A,J)|\ \ev_{\mathbf{z},J}(u)\in X\}$$
and
$$\CL{M}(A,J,\KL{H};\mathbf{z},X):=\{u\in\CL{M}(A,J,\KL{H})|\ \ev_{\mathbf{z},J,\Kl{H}}(u)\in X\}.$$

\begin{thm}\label{thm:constrained moduli space is a finite dim manifold}(E.g. \cite[ Theorem $3.4.1$]{MS3}.)\\
There exist  sets $\CL{J}_{\mathrm{reg}}(A,\mathbf{z},X)$ (resp. $\Omega_{c,\mathrm{reg}}^1\(\mathbb{S}^2,\CL{C}^{\infty}_c(M),A,\mathbf{z},X\)$ ) of the second category in $\CL{J}(M,\del M,\omega)$ (resp. $\Omega_c^1\(\mathbb{S}^2,\CL{C}^{\infty}_c(M)\)$), such that for any $J\in\CL{J}_{\mathrm{reg}}(A,\mathbf{z},X)$ (resp. $(J,\KL{H})\in\CL{J}(M,\del M,\omega)\times\Omega_{c,\mathrm{reg}}^1\(\Sigma,\CL{C}^{\infty}_c(M),A,\mathbf{z},X\)$ ) the moduli space $\CL{M}_s(A,J;\mathbf{z},X)$ (resp. $\CL{M}(A,J,\KL{H};\mathbf{z},X)$) is a finite dimensional smooth manifold of dimension $$\dim\CL{M}_s(A,J;\mathbf{z},X)=\dim\CL{M}(A,J,\KL{H};\mathbf{z},X)=2n+2I_c(A)-\codim X.$$
\end{thm}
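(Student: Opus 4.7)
The plan is a standard transversality argument via a universal moduli space, adapted to the boundary by invoking the confinement results of Section~\ref{subsection: non-perturbed J-spheres}.

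For the unperturbed statement, form the universal moduli space
\[
\CL{M}_s^*(A) := \{(J,u) \mid J \in \CL{J}(M,\del M,\omega),\; u \in \CL{M}_s(A,J)\},
\]
with $J$ taken of class $C^\ell$ for large $\ell$; the $C^\infty$ case is recovered by the usual approximation (cf.\ \cite[Section~3.2]{MS3}). The Fredholm/implicit-function argument of \cite[Chapter~3]{MS3} realizes $\CL{M}_s^*(A)$ as a separable Banach manifold; Corollary~\ref{corollary:J-spheres are bounded away from the boundary} confines every $u\in\CL{M}_s(A,J)$ to $\overset{\circ}{M}$, so all perturbations of $J$ may be kept interior. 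The universal evaluation
\[
\mathrm{ev}_{\mathbf{z}}^{\mathrm{univ}} : \CL{M}_s^*(A) \to \(\overset{\circ}{M}\)^m, \qquad (J,u)\mapsto(u(z_1),\ldots,u(z_m)),
\]
is then a submersion: simplicity of $u$ guarantees a dense set of somewhere-injective points, and since each $u(z_i)\in\overset{\circ}{M}$ one can perturb $J$ in a small interior ball around $u(z_i)$ to realize any prescribed direction in $T_{u(z_i)}M$. Hence $\mathrm{ev}_{\mathbf{z}}^{\mathrm{univ}}$ is transverse to $X$ and $\(\mathrm{ev}_{\mathbf{z}}^{\mathrm{univ}}\)^{-1}(X)$ is a Banach submanifold of codimension $\codim X$.

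Applying the Sard-Smale theorem to the Fredholm projection $\(\mathrm{ev}_{\mathbf{z}}^{\mathrm{univ}}\)^{-1}(X)\to\CL{J}(M,\del M,\omega)$, whose index equals $\dim\CL{M}_s(A,J)-\codim X = 2n+2I_c(A)-\codim X$, and intersecting the resulting residual set of regular values with $\CL{J}_{\mathrm{reg}}(A)$ from Theorem~\ref{thm:moduli space of simple curves is a manifold}, yields $\CL{J}_{\mathrm{reg}}(A,\mathbf{z},X)$; for $J$ in this set, $\CL{M}_s(A,J;\mathbf{z},X)$ is smooth of the claimed dimension. The perturbed statement is formally parallel: the parameter space becomes $\CL{J}(M,\del M,\omega)\times\Omega_c^1\(\mathbb{S}^2,\CL{C}^{\infty}_c(M)\)$, Corollary~\ref{cor:perturbed J-spheres are bounded away from the boundary} again keeps solutions in $\overset{\circ}{M}$, and the richer perturbation class $\KL{H}$ supported near the points $(z_i,u(z_i))$ makes submersivity of the universal evaluation even more immediate and removes any need to assume simplicity, thereby handling multiply covered spheres as well.

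The main obstacle is establishing submersivity of $\mathrm{ev}_{\mathbf{z}}^{\mathrm{univ}}$, which is the only point where the geometric structure of the problem enters in an essential way: given a nonzero vector in $T_{u(z_i)}M$, one must produce an infinitesimal variation of $J$ (respectively of $\KL{H}$) supported in a small interior neighborhood of $u(z_i)$ (respectively of $(z_i,u(z_i))$) whose induced deformation of $u$ hits that vector at $z_i$. Convexity of $\del M$ is crucial precisely here: it forces $u(z_i)\in\overset{\circ}{M}$, so the required interior supports are available and never conflict with the adaptedness of $J$ near $\del M$.
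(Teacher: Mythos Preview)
The paper does not supply its own proof of this theorem but simply invokes \cite[Theorem~3.4.1]{MS3}; your sketch is precisely the standard universal-moduli-space/Sard--Smale argument from that reference, with the convex-boundary adaptation via Corollaries~\ref{corollary:J-spheres are bounded away from the boundary} and~\ref{cor:perturbed J-spheres are bounded away from the boundary} made explicit. So your approach is correct and coincides with what the paper (by citation) intends.
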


\section{Genus zero Gromov-Witten invariants relative to the boundary.}
\subsection{Interior pseudocycles and oriented bordisms.}Let $X$ be a smooth compact $n$-dimensional manifold with boundary $\del X$ or more generally compact manifold with corners. We shall deal only with the simplest case of manifolds with corners, namely the product of compact manifolds with boundary. As usual, $\overset{\circ}{X}:=X\minus\del X$ denotes the interior of $X$.
\subsubsection{Interior pseudocycles.}
\begin{defn}
 An arbitrary subset $B\subseteq X$ is said to be of \textit{dimension at most} $d$ if it is contained in the image of a map $g:W\to X$, which  is defined on a manifold $W$ whose components have dimension less than or equal to $d$. In this case we write $\dim B\leq d$.
\end{defn}

\begin{defn}\label{defn: pseudocycles}
A $d$-dimensional \textit{interior pseudocycle} in $X$ is a smooth map $f:V\to X$ defined on an oriented $d$-dimensional manifold $V$, such that
\be
\item $\cl{f(V)}$ is compact  in $X$,
\item $\cl{f(V)}\subsetneq\overset{\circ}{X}$
\item $\dim\Omega_f\leq d-2,$ where $\Omega_f:=\bigcap\limits_{K\Subset V}\cl{f(V\minus K)}$ and $K\Subset V$ means that $K$ is a compact subset of $V$. This set is called the \textit{(omega) limit set} of $f$.
\ee
    Two $d$-dimensional interior pseudocycles $f_0:V_0\to X$ and $f_1:V_1\to X$ are called \textit{(interiorly) bordant} if there is a $(d+1)$-dimensional oriented manifold $W$ with boundary $\del W=V_1\cup(-V_0)$ and a smooth map $F: W\to X$ such that $$\cl{F(W)}\subsetneq\overset{\circ}{X},\;\;\;\;F|_{V_0}=f_0,\;\;\;\;F|_{V_1}=f_1,\;\;\;\;\dim\Omega_F\leq d-1.$$
\end{defn}

We have the  following elementary properties of pseudocycles.

\begin{prop}\label{prop:properties of pseudocycles}
Let $f:V\to X$ be a $d$-dimensional interior pseudocycle.
\be
\item[$(i)$] A point $x$ lies in $\Omega_f$ if and only if $x$ is the limit point of a sequence $\{f(v_n)\}_{n\in\nn}$, where $\{v_n\}_{n\in\nn}$ has no convergent subsequence.
\item[$(ii)$] The limit set $\Omega_f$ is always compact.

\item[$(iii)$] If $V$ is the interior of a compact manifold $\cl{V}$ with boundary $\del\cl{V}$ and $f$ extends to a continuous map $f:\cl{V}\to X$ then $\Omega_f=f(\del\cl{V}).$
\ee
\end{prop}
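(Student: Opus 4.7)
The proposition consists of three largely independent parts; I would handle them in the order $(i)$, $(ii)$, $(iii)$, since $(i)$ gives a convenient sequential characterization that simplifies the later arguments.

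For part $(i)$, my plan is to unwind the definition $\Omega_f=\bigcap_{K\Subset V}\cl{f(V\minus K)}$ and exploit the $\sigma$-compactness of the manifold $V$. Fix a compact exhaustion $K_1\subset K_2\subset\cdots$ of $V$. For the forward direction, assuming $x\in\Omega_f$, for each $n$ the point $x$ lies in $\cl{f(V\minus K_n)}$, so one can choose $v_n\in V\minus K_n$ with $d_X(f(v_n),x)<1/n$; any convergent subsequence $v_{n_k}\to v\in V$ would force $v\in K_m$ for some $m$, contradicting $v_{n_k}\in V\minus K_{n_k}$ for large $k$. For the reverse direction, given a sequence $v_n$ with no convergent subsequence and $f(v_n)\to x$, one observes that any compact $K\Subset V$ contains at most finitely many of the $v_n$'s (otherwise sequential compactness of $K$ would extract a convergent subsequence), hence infinitely many lie in $V\minus K$, giving $x\in\cl{f(V\minus K)}$.

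For part $(ii)$, I would argue that $\Omega_f$ is automatically closed as an intersection of closed subsets of $X$, and that $\Omega_f\subseteq\cl{f(V)}$ (take $K=\varnothing$ in the intersection, or note each $\cl{f(V\minus K)}\subseteq\cl{f(V)}$). Since $\cl{f(V)}$ is compact by the first bullet of Definition~\ref{defn: pseudocycles}, the closed subset $\Omega_f$ is compact as well.

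For part $(iii)$, I would use $(i)$ together with the compactness of $\cl{V}$. To prove $f(\del\cl{V})\subseteq\Omega_f$, take $v\in\del\cl{V}$ and choose a sequence $v_n\in V$ with $v_n\to v$; no subsequence converges in $V$ because $v\notin V$, and by continuity $f(v_n)\to f(v)$, so $(i)$ yields $f(v)\in\Omega_f$. Conversely, for $x\in\Omega_f$ one obtains by $(i)$ a sequence $v_n\in V$ with $f(v_n)\to x$ and no $V$-convergent subsequence; compactness of $\cl{V}$ produces a subsequence $v_{n_k}\to v\in\cl{V}$, and $v$ must lie in $\del\cl{V}$ since otherwise the subsequence would converge in $V$, so continuity of the extension gives $x=\lim f(v_{n_k})=f(v)\in f(\del\cl{V})$.

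None of the three parts appears genuinely difficult; the main thing to be careful about is the subtle point in $(iii)$ that "no convergent subsequence" is meant in $V$, not in $\cl{V}$, so the compactness of the ambient $\cl{V}$ can still be invoked to extract a limit in $\del\cl{V}$. The other pitfall is in $(i)$: one must choose the exhausting sequence of compacts $K_n$ at the outset of the forward direction, as the definition of $\Omega_f$ only asserts membership in each $\cl{f(V\minus K)}$ separately, and a diagonal selection is needed to assemble the single sequence $v_n$.
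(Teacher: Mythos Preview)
Your argument is correct in all three parts. The paper itself does not supply a proof of this proposition: it is introduced as ``elementary properties of pseudocycles'' and stated without demonstration, so there is nothing to compare against. Your sequential characterization in $(i)$, the closed-in-compact argument in $(ii)$, and the two inclusions in $(iii)$ via compactness of $\cl{V}$ are exactly the standard route and would serve as a complete proof; the only cosmetic point is that in the forward direction of $(i)$ you should take the exhaustion with $K_n\subset\mathrm{int}(K_{n+1})$ so that a limit $v\in V$ of a subsequence is eventually trapped in some $K_{m+1}$.
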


\subsubsection{Index of intersection.}
Suppose that $X$ is a smooth compact manifold  with boundary $\partial X$. Let  $e:U\to X^m$  be an interior  pseudocycle in $X^m$, for some $m\in\NN$. In addition, let  $f_i:V_i\to X$ be a smooth  singular manifold  in  $X$  for $i=1,\ldots, p$ and  let  $f_j:(V_j,\partial V_j)\to(X,\partial X)$ be a smooth  singular manifold  in  $(X,\partial X)$, for $j=p+1,\ldots, m$, where $p\in\{0,\ldots,m\}$. Then $$f:=\prod\limits_{i=1}^mf_i:\prod\limits_{i=1}^mV_i\to X^m$$
is a smooth map of manifolds with corners, such that $f\Big(\partial\big(\prod_{i=1}^mV_i\big)\Big)\subseteq\partial\(X^m\).$  In this case we would like to define the index of intersection $e\cdot_{\ff} f $ of these two "cycles". So we need some notion of transversality.
\begin{defn}\label{defn: transversale pseudocycles}
Let  $e:U\to X^m$ and $f:\prod\limits_{i=1}^mV_i\to X^m$ be two maps as above. We say that they are \textbf{transverse} if
\be
\item[$1.$] $\Omega_e\cap\cl{f\(\prod\limits_{i=1}^mV_i\)}=\varnothing$,
\item[$2.$] $e(u)=f(v)=:x\in X^m \;\;\;\then\;\;\; T_xX^m=\mathrm{Im}\ d_ue+\mathrm{Im}\ d_vf.$
\ee
We shall write classically $e\pitchfork f$.
\end{defn}

If $e\pitchfork f$ then the set
$$\Delta_{e,f}:=\left\{(u,v)\in U\times \prod_{i=1}^mV_i|\ e(u)=f(v)\right\}\subseteq U\times\prod_{i=1}^m\overset{\circ}{V_i}$$
is a compact (by property $1$ of  Definition~\ref{defn: transversale pseudocycles}\ )\  manifold of dimension $\dim U+\sum_{i=1}^m\dim V_i - m\dim X$ (by property $2$ of  Definition~\ref{defn: transversale pseudocycles}). In particular, this set is finite if $\dim U+\sum_{i=1}^m\dim V_i= m\dim X$.

\begin{thm}\label{thm:index of intersection of pseudocycles}
Let  $e:U\to X^m$ and $f:\prod\limits_{i=1}^mV_i\to X^m$ be two maps as above, such that $\dim U+\sum_{i=1}^m\dim V_i= m\dim X$.
\be
\item[$(i)$] There exists a set $\mathrm{Diff}_{reg}(e,f)\subseteq(\mathrm{Diff}(X,\partial X))^m$ of the second category such that $e\pitchfork \varphi\circ f$ for every $\varphi\in\mathrm{Diff}_{reg}(e,f)$. Here,  $\mathrm{Diff}(X,\partial X)$ denotes  the group of  diffeomorphisms  $\varphi:X\to X$, such that $\varphi|_{\partial X}=\mathrm{id}_{\partial X}$.
\item[$(ii)$] If $e\pitchfork f$ then the set $\Delta_{e,f}$ is finite. In the non-oriented case, where the base field $\FF$ is $\ZZ_2$, we define the $\ZZ_2$-valued index of intersection $e\cdot_{\ff} f$ of $e$ and $f$ as
$$e\cdot_{\ff} f=\#\Delta_{e,f}\;(\;\mathrm{mod}\ 2).$$
In the oriented case, where the base field $\FF$ is $\QQ$, we define the $\ZZ$-valued index of intersection $e\cdot_{\ff} f$ of $e$ and $f$ as
$$e\cdot_{\ff}  f=\sum\limits_{(u,v)\in\Delta_{e,f}}I_{(u,v)},$$
where $I_{(u,v)}$ is the classical local intersection number of $e(U)$ and $f(V)$ at $e(u)=f(v)$.

\item[$(iii)$]  The index of intersection $e\cdot_{\ff}  f$ depends only on the bordism classes of $e$ and of $(V_i,f_i)$  for $i=1,\ldots, m$.
\ee
\end{thm}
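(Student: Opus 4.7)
For part (i), I would run a parametric Sard--Smale transversality argument. The key ingredient is that for any interior point $x \in \overset{\circ}{X}$, the orbit map $\mathrm{Diff}(X,\partial X) \to \overset{\circ}{X}$, $\varphi \mapsto \varphi(x)$, is a submersion, because compactly supported isotopies of $X$ (vanishing near $\partial X$) realize any prescribed velocity vector at $x$. Consequently the universal evaluation
\[
F \colon \mathrm{Diff}(X,\partial X)^m \times U \times \prod_{i=1}^m \overset{\circ}{V_i} \longrightarrow X^m \times X^m,
\]
defined by $F(\varphi, u, v) := \bigl(e(u),\,\varphi_1(f_1(v_1)), \ldots, \varphi_m(f_m(v_m))\bigr)$, is transverse to the diagonal $\Delta_{X^m}$, so $F^{-1}(\Delta_{X^m})$ is a smooth Banach manifold; the projection onto $\mathrm{Diff}(X,\partial X)^m$ is Fredholm of index $\dim U + \sum_i \dim V_i - m \dim X = 0$, and Sard--Smale yields a residual (hence second-category) set of regular values, each of which is a $\varphi$ satisfying condition (2) of Definition~\ref{defn: transversale pseudocycles}. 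For condition (1), I apply the same machinery to a witness $g \colon Z \to X^m$ with $g(Z) \supseteq \Omega_e$ and $\dim Z \leq \dim U - 2$: the dimension inequality $\dim Z + \sum_i \dim V_i \leq m \dim X - 2$ forces a generic $\varphi$ to push $\varphi \circ f(\prod V_i)$ off $g(Z)$, while the compactness of $\Omega_e$ and of $\overline{\varphi \circ f(\prod V_i)}$ promotes this disjointness from the image to the closure. Intersecting the two residual sets gives $\mathrm{Diff}_{reg}(e,f)$.

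Part (ii) then follows immediately. Transversality makes $\Delta_{e,f}$ a smooth $0$-dimensional manifold; compactness comes from a standard escape-to-$\Omega_e$ argument. Given $(u_n, v_n) \in \Delta_{e,f}$, the compactness of $\prod V_i$ permits passing to a subsequence with $v_n \to v$, so $e(u_n) = f(v_n) \to f(v)$. If $\{u_n\}$ had no convergent subsequence in $U$, then by Proposition~\ref{prop:properties of pseudocycles}(i) one would obtain $f(v) \in \Omega_e$, contradicting condition (1). Thus $\Delta_{e,f}$ is finite and the index $e \cdot_{\ff} f$ is well defined; in the oriented case the signs are assigned via the standard local intersection number between the oriented subspaces $\mathrm{Im}\,d_u e$ and $\mathrm{Im}\,d_v f$ inside $T_{e(u)} X^m$, and in the $\FF = \ZZ_2$ case one simply counts modulo $2$.

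For part (iii), suppose $E \colon W \to X^m$ is a pseudocycle bordism between $e_0$ and $e_1$, so $\dim W = \dim U + 1$ and $\dim \Omega_E \leq \dim U - 1$. I would run the transversality scheme of (i) simultaneously for the triple $E, e_0, e_1$ against $f$, producing a single $\varphi$ for which $\Delta_{E,\varphi\circ f}$ is a smooth compact oriented $1$-manifold with boundary $\Delta_{e_1,\varphi\circ f} \sqcup (-\Delta_{e_0,\varphi\circ f})$, exhibiting the two $0$-cycles as oriented cobordant and thus forcing equality of signed counts; the $\ZZ_2$ parity argument is identical. The analogous construction, perturbing an individual $f_i$ through a relative bordism, handles bordism invariance in the second slot. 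The main obstacle, and the only place where care is genuinely needed, is the compactness of $\Delta_{E,\varphi\circ f}$: the escape alternative now involves the limit set $\Omega_E$ with only the weaker estimate $\dim \Omega_E + \sum_i \dim V_i \leq m \dim X - 1$. Because this bound is still strictly below $m \dim X$, the Sard--Smale slack built into the pseudocycle definition is exactly enough to disjoin $\Omega_E$ from $\overline{\varphi\circ f(\prod V_i)}$ for generic $\varphi$, so the cobordism is closed and the argument goes through.
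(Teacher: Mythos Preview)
Your proposal is correct and follows essentially the same route as the paper. The paper disposes of $(i)$--$(ii)$ by citing \cite[Lemma~6.5.5]{MS3} verbatim, which is precisely the parametric transversality / Sard--Smale argument you spell out; for $(iii)$ the paper argues, as you do, by choosing the bordism $E$ (resp.\ $F = F_1 \times f_2 \times \cdots \times f_m$) transverse to $f$ (resp.\ $e$), using the dimension estimate $\dim \Omega_E + \sum_i \dim V_i - m\dim X \leq -1$ to secure condition~(1) of Definition~\ref{defn: transversale pseudocycles}, and reading off the vanishing of the signed count from the resulting compact oriented $1$-manifold. One small simplification: since each $V_i$ is compact, $\varphi\circ f\bigl(\prod_i V_i\bigr)$ is already closed, so your ``promotion to the closure'' step in $(i)$ is automatic.
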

\begin{proof}
We consider only the oriented case. The non-oriented one is proved in a similar way.\\
The proof of statements $(i)-(ii)$ verbatim repeats that in \cite[ Lemma $6.5.5$]{MS3}. For the statement $(iii)$, suppose first that $e$ is interiorly bordant to the empty set,  and $E:W\to X^m$ is a corresponding bordism. By the standard argument on the general position in differential topology this bordism can be chosen to be transversal to $f$ in the sense of Definition~\ref{defn: transversale pseudocycles}.  Condition 1 in Definition~\ref{defn: transversale pseudocycles}  follows from the inequality  $\dim\Omega_E\leq\dim U-1$ that implies
$$\dim\Omega_E+\sum_{i=1}^m\dim V_i-m\dim X\leq -1.$$
Hence, the set
$$\Delta_{E,f}=\left\{(w,v)\in W\times \prod_{i=1}^mV_i|\ E(w)=f(v)\right\}\subseteq W\times\prod_{i=1}^m\overset{\circ}{V_i}$$
is a compact oriented $1$-dimensional manifold with boundary $\partial\Delta_{E,f}=\Delta_{e,f}$. Thus $e\cdot_{\ff}  f=0$. Secondly, suppose that $[V_1,f_1]=0$ with a corresponding bordism $F_1:W_1\to X$ and  put $$
F:=F_1\times f_2\times\cdots\times f_m:W_1\times V_2\times\cdots\times V_m\to X^m.
$$
Then $F$  is a smooth map of manifolds with corners, such that $$F\Big(\partial\big(W_1\times\prod_{i=2}^mV_i\big)\Big)\subseteq\partial\(X^m\),$$ and, as above,  we can choose this bordism  to be transversal to $e$ in the sense of Definition~\ref{defn: transversale pseudocycles}.  Condition 1 in  Definition~\ref{defn: transversale pseudocycles}  follows from the inequality
$$
\dim\Omega_e+\dim W_1+\sum_{i=2}^m\dim V_i-m\dim X\leq -1.
$$
Hence, the set
$$
\Delta_{e,F}=\left\{(u,v)\in U\times \Big(W_1\times\prod_{i=2}^mV_i\Big)|\ e(w)=F(v)\right\}\subseteq U\times W_1\times\prod_{i=2}^m\overset{\circ}{V_i} $$
is a compact oriented $1$-dimensional manifold with boundary $\partial\Delta_{e,F}=\Delta_{e,f}$. Thus $e\cdot_{\ff}  f=0$.  The same argument shows that $e\cdot_{\ff}  f$ does not depend on $(V_i,f_i)$  for $i=2,\ldots, m$.
\end{proof}
As a consequence, every interior pseudocycle $e:U\to X^m$ determines a well defined $m$-linear map
\beq\label{equation:itersection homomorphism til{Psi}_e}
\hat{\Psi}_{e,p}:\Omega_*(X;\FF)^{\times p}\times\Omega_*(X,\del X;\FF)^{\times (m-p)}\to\FF
\eeq
by $$\hat{\Psi}_{e,p}([V_1,f_1],\ldots, [V_m,f_m])=e\cdot_{\ff}(f_1\times\dots\times f_m),$$
where $\Omega_*(X;\FF), \Omega_*(X,\del X;\FF)$ are the classical Thom bordism groups -- see \cite{CF} and \cite{Stong}. Note that the homomorphism $\hat{\Psi}_{e,p}$ depends only on the bordism class of the  interior pseudocycle $e$.

Recall that the evaluation homomorphism
$$\mu^{p,m}:\Omega_*(X;\FF)^{\times p}\times\Omega_*(X,\del X;\FF)^{\times (m-p)} \to  H_*(X;\FF)^{\times p}\times H_*(X,\del X;\FF)^{\times (m-p)}$$
given by
$$\mu^{p,m}([V_1,f_1],\ldots, [V_m,f_m])=\( \((f_i)_*([V_i])\)_{i=1}^p, \((f_j)_*([V_j,\del V_j]\)_{j=p+1}^m \)$$
is an epimorphism. In particular, the homomorphism $\hat{\Psi}_{e,p}$ descends to a well defined homomorphism  \beq\label{equation:itersection homomorphism Psi_e}
    \Psi_{e,p}:H_*(X;\FF)^{\times p}\times H_*(X,\del X;\FF)^{\times (m-p)}\to\FF,
\eeq
which also depends only on the bordism class of the  interior pseudocycle $e$.

\subsection{Gromov-Witten pseudocycle.}
Let us fix a finite number of pairwise distinct marked points $\mathbf{z}:=(z_1,\ldots,z_m)\in\(\mathbb{S}^2\)^m.$ In the following constructions we shall consider mainly the case of $m\in\{3,4\}$.

\subsubsection{Case \textrm{I} - non-perturbed.}\label{subsection: conditions Am}
Let  $A\in H_2^S(M)$ and $J\in\CL{J}(M,\del M,\omega)$.  We shall compactify the image
$$\mathrm{ev}_{\mathbf{z},J}(\CL{M}_s(A,J))\subseteq\Big(\overset{\circ}{M}\Big)^m$$ by adding all possible limits of points $\mathrm{ev}_{\mathbf{z},J}(u_n)$, where $(u_n)_{n\in\nn}$ is any sequence in $\CL{M}_s(A,J)$. By the Gromov compactness theorem the  boundary of  $\mathrm{ev}_{\mathbf{z},J}(\CL{M}_s(A,J))$ will be formed by the $\mathrm{ev}_{\mathbf{z},J}$-images of different cusp (reducible) curves. By the convexity it follows that the compactified space $\cl{\mathrm{ev}_{\mathbf{z},J}(\CL{M}_s(A,J))}$  lies in $\Big(\overset{\circ}{M}\Big)^m$.  Thus we can state the following theorem, which are completely analogous to the closed case. Recall that a class $B\in H_2^S(M)$ is called $J$-effective if it can be represented by a $J$-holomorphic sphere. Assume that  $A\in H_2^S(M)$  satisfies  the following conditions.
\be
\item[$(A_3)$] In the case $m=3$, i.e. the case of three marked points, we require that every $J$-effective  class $B\in H_2^S(M)$  has Chern number  $c_1(B)\geq0$. Moreover, the class $A$ is not a multiple $A=kB$ of a $J$-effective  class $B\in H_2^S(M)$  with  $k>1$ and $c_1(B)=0$.

\item[$(A_4)$] In the case $m=4$, i.e. the case of four marked points, we require that every $J$-effective class $B\in H_2^S(M)$ has Chern number $c_1(B)\geq 1$. Moreover, the class $A$ is not a multiple $A=kB$ of a $J$-effective  class $B\in H_2^S(M)$  with  $k>1$ and $c_1(B)=1$.
\ee

\begin{thm}\label{thm:GW-pseudocycle in the simple case}(E.g. \cite[ Theorem $5.4.1$]{MS2}.)\\
Let  $(M,\omega)$ be a $2n$-dimensional compact convex semi-positive  symplectic manifold. Fix a class  $A\in H_2^S(M)\minus\{0\}$ and  fix  pairwise distinct marked points
$$
\mathbf{z}:=(z_1,\ldots,z_m)\in\(\mathbb{S}^2\)^m,
$$
where $m\in\{3,4\}$. Then there exists a set
$$
\CL{J}_{reg}(M,\partial M,\omega,A,\mathbf{z})\subseteq\CL{J}(M,\partial M,\omega)
$$
of the second category, such that
\be
\item[$(i)$] $\CL{J}_{reg}(M,\partial M,\omega,A,\mathbf{z})\subseteq\CL{J}_{reg}(A)$,
    see the notation of Theorem~\ref{thm:moduli space of simple curves is a manifold},

\item[$(ii)$] if $J\in\CL{J}_{reg}(M,\partial M,\omega,A,\mathbf{z})$ and the class $A$ satisfies $(A_m)$, then
    $$
    \mathrm{ev}_{\mathbf{z},J}:\CL{M}_s(A,J)\to M^m
    $$
    is an interior (oriented) pseudocycle of dimension $2n+2c_1(A)$. Its bordism class is independent of $J$ and $\mathbf{z}$.
\ee
\end{thm}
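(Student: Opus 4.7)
The plan is to follow the closed-case argument from \cite[Theorem 5.4.1]{MS2}, using Corollary~\ref{corollary:J-spheres are bounded away from the boundary} to replace the compactness arguments that rely on $M$ being closed. First I would define $\mathcal{J}_{reg}(M,\partial M,\omega,A,\mathbf{z})$ as the intersection of $\mathcal{J}_{reg}(A)$ with a countable family of second-category subsets: the regularity sets $\mathcal{J}_{reg}(B)$ from Theorem~\ref{thm:moduli space of simple curves is a manifold} for every class $B$ that can appear as a component of a cusp decomposition $A = B_1 + \cdots + B_k$ with $k \geq 2$ (only countably many such classes arise because the $\omega$-energy of each component is bounded by $\omega(A)$), together with the transversality sets of Theorem~\ref{thm:constrained moduli space is a finite dim manifold} ensuring that the evaluation maps from the relevant moduli spaces are transverse to the diagonal and to the marked-point constraints. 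Since $\mathcal{J}(M,\partial M,\omega)$ is a Baire space, this intersection is again of second category, and by construction it is contained in $\mathcal{J}_{reg}(A)$, proving~(i).

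For~(ii), fix $J$ in the above intersection and assume $A$ satisfies $(A_m)$. By Theorem~\ref{thm:moduli space of simple curves is a manifold}, $\mathcal{M}_s(A,J)$ is a smooth oriented manifold of dimension $2n + 2c_1(A)$, so $\mathrm{ev}_{\mathbf{z},J}$ is automatically smooth on a manifold of the correct dimension. I verify the three axioms of Definition~\ref{defn: pseudocycles} in turn. Compactness of $\overline{\mathrm{ev}_{\mathbf{z},J}(\mathcal{M}_s(A,J))}$ follows from the Gromov compactness theorem. The strict interior-containment $\overline{\mathrm{ev}_{\mathbf{z},J}(\mathcal{M}_s(A,J))} \subsetneq (\overset{\circ}{M})^m$ is the main place where convexity enters: every Gromov limit is a cusp curve whose components are non-constant $J$-holomorphic spheres, and Corollary~\ref{corollary:J-spheres are bounded away from the boundary} forces each component into the compact set $M \setminus W$, so the full closure lies inside $(M \setminus W)^m$. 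To bound $\dim\Omega_{\mathrm{ev}_{\mathbf{z},J}}$, I stratify the set of Gromov limits by the topological type of the bubble tree; for each decomposition $A = B_1 + \cdots + B_k$ with $k \geq 2$ into $J$-effective classes, summing the dimensions of the component moduli spaces, subtracting node-matching codimensions, and accounting for reparameterization freedom yields a total dimension that is forced to be at most $2n + 2c_1(A) - 2$ by conditions $(A_m)$ together with semi-positivity, exactly as in the closed case.

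For independence of the bordism class on $J$ and $\mathbf{z}$, I connect any two admissible pairs by a generic path, which is possible since the space of adapted almost complex structures is connected (Section~\ref{subsection: non-perturbed J-spheres}) and the configuration space of distinct ordered marked points on $\mathbb{S}^2$ is connected. The corresponding parametric moduli space, constructed exactly as in Theorem~\ref{thm:moduli of perturbed spheres is a manifold} but with trivial Hamiltonian perturbation, yields a smooth oriented cobordism between $\mathcal{M}_s(A,J_0)$ and $\mathcal{M}_s(A,J_1)$; its evaluation is the required pseudocycle bordism, with the interior-containment condition again guaranteed by Corollary~\ref{corollary:J-spheres are bounded away from the boundary}. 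The principal obstacle is the cusp-stratum dimension count, but since this analysis is local on $M$ and the semi-positivity hypothesis coincides with the closed-case one, the argument of \cite[\S 5.4]{MS2} transfers essentially without change; the only genuinely new ingredient in the convex setting is the convexity-based confinement, which keeps all bubbles inside a fixed compact subset of $\overset{\circ}{M}$.
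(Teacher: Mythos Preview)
Your proposal is correct and matches the paper's approach: the paper does not give a detailed proof of this theorem but simply observes, in the paragraph preceding the statement, that by convexity the compactified image $\overline{\mathrm{ev}_{\mathbf{z},J}(\mathcal{M}_s(A,J))}$ lies in $\big(\overset{\circ}{M}\big)^m$ and then declares the result ``completely analogous to the closed case'' with a citation to \cite[Theorem~5.4.1]{MS2}. Your outline spells out exactly this reduction, with Corollary~\ref{corollary:J-spheres are bounded away from the boundary} supplying the interior confinement and the rest of the argument imported from the closed setting.
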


\subsubsection{Case \textrm{II} - perturbed.}
We should treat the case, where $A\in H_2^S(M)$ does not satisfy conditions $(A_3)$, $(A_4)$. There are several ways of dealing with this problem. One can use the argument by Y. Ruan in \cite[ Section $3$]{Ruan}, where he considers the perturbed Cauchy-Riemann equation $\cl{\del}_{J,\Kl{H}}(u)=0$ that already has no multiply covered solutions. See also \cite[ Section $9.1$]{MS2} or \cite[ Chapter $8$]{MS3}.  Like in \textit{Case \textrm{I}}, we compactify the image
$\mathrm{ev}_{\mathbf{z},J,\Kl{H}}(\CL{M}(A,J,\KL{H}))$ by adding all possible limits of points $\mathrm{ev}_{\mathbf{z},J,\Kl{H}}(u_n)$, where $(u_n)_{n\in\nn}$ is any sequence in $\CL{M}(A,J,\KL{H})$. It follows from the convexity  that the compactified space $\cl{\mathrm{ev}_{\mathbf{z},J,\Kl{H}}(\CL{M}(A,J,\KL{H}))}$  lies in $\Big(\overset{\circ}{M}\Big)^m$.

\begin{thm}\label{thm:GW-pseudocycle in the perturbed case}(E.g. \cite[ Section $3$]{Ruan} or \cite[ Items $8.5.1 -  8.5.4$]{MS3}.)\\
Let  $(M,\omega)$ be a $2n$-dimensional compact convex semi-positive  symplectic manifold. Fix a class  $A\in H_2^S(M)\minus\{0\}$ and  fix  pairwise distinct marked points
$$
\mathbf{z}:=(z_1,\ldots,z_m)\in\(\mathbb{S}^2\)^m,
$$
where $m\in\{3,4\}$. Then there exists a set
$$
\CL{JH}_{reg}(M,\partial M,\omega,A,\mathbf{z})\subseteq\CL{J}(M,\partial M,\omega)\times\Omega_c^1\(\mathbb{S}^2, C^{\infty}_c(M)\)
$$
of the second category such that
\be
\item[$(i)$] for every $(J,\KL{H})\in\CL{JH}_{reg}(M,\partial M,\omega,A,\mathbf{z})$, we have that  $\KL{H}\in\Omega_{c,\mathrm{reg}}^1\(\mathbb{S}^2, C^{\infty}_c(M)\)$,
    see the notation of Theorem~\ref{thm:moduli of perturbed spheres is a manifold},

\item[$(ii)$] if $(J,\KL{H})\in\CL{JH}_{reg}(M,\partial M,\omega,A,\mathbf{z})$
    $$
    \mathrm{ev}_{\mathbf{z},J,\Kl{H}}:\CL{M}(A,J,\KL{H}) \to M^m
    $$
    is an interior (oriented) pseudocycle of dimension $2n+2c_1(A)$. Its bordism class is independent of $(J,\KL{H})$ and $\mathbf{z}$.
\ee
\end{thm}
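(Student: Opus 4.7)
The plan is to verify the three defining conditions of an interior pseudocycle in Definition~\ref{defn: pseudocycles} for $\mathrm{ev}_{\mathbf{z},J,\Kl{H}}$ on the smooth oriented manifold $\CL{M}(A,J,\KL{H})$ furnished by Theorem~\ref{thm:moduli of perturbed spheres is a manifold}, and then to establish bordism invariance via the parametrized moduli space also provided by that theorem. I would define $\CL{JH}_{reg}(M,\partial M,\omega,A,\mathbf{z})$ as the intersection of $\CL{J}(M,\del M,\omega)\times\Omega_{c,\mathrm{reg}}^1(\mathbb{S}^2,\CL{C}^{\infty}_c(M))$ with finitely many additional residual subsets, one per admissible cusp-curve decomposition of $A$, chosen so that the evaluation maps on all bubble strata are transverse. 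Existence of such a residual set is the construction of Ruan \cite[Section 3]{Ruan}, reproduced in \cite[Chapter 8]{MS3}; part $(i)$ of the theorem is built into the definition.

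Smoothness of $\mathrm{ev}_{\mathbf{z},J,\Kl{H}}$ is automatic. Compactness of $\overline{\mathrm{ev}_{\mathbf{z},J,\Kl{H}}(\CL{M}(A,J,\KL{H}))}$ follows from the Gromov compactness theorem for the perturbed Cauchy--Riemann equation, applicable here because perturbed solutions representing $A$ have uniformly bounded energy and $\mathrm{supp}(\KL{H}) \Subset \overset{\circ}{M}$; this was already noted in the remark after Corollary~\ref{cor:perturbed J-spheres are bounded away from the boundary}. The second pseudocycle condition, $\overline{\mathrm{ev}_{\mathbf{z},J,\Kl{H}}(\CL{M}(A,J,\KL{H}))}\subsetneq(\overset{\circ}{M})^m$, is exactly where the convexity hypothesis enters: Corollary~\ref{cor:perturbed J-spheres are bounded away from the boundary} keeps every perturbed solution inside $M\smallsetminus W$ for an open neighborhood $W$ of $\del M$, while Corollary~\ref{corollary:J-spheres are bounded away from the boundary} keeps each non-constant $J$-holomorphic bubble appearing as a Gromov limit inside $M\smallsetminus W$ as well (bubbles are unperturbed because the rescaling procedure removes the compactly supported $\KL{H}$).

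The main obstacle is the dimension estimate $\dim\Omega_{\mathrm{ev}_{\mathbf{z},J,\Kl{H}}}\leq 2n+2c_1(A)-2$. By Gromov compactness, $\Omega_{\mathrm{ev}}$ is the image of a stratified space parametrizing cusp curves consisting of a principal perturbed sphere in some class $A_0$ together with a non-empty tree of $J$-holomorphic bubbles whose simple underlying classes $B_i$ satisfy $A_0+\sum_i m_i B_i = A$. The standard stratum-by-stratum dimension count (as in \cite[Section 6.6]{MS3} for the closed case, unaltered in the present setting because all components lie in $M\smallsetminus W$ and we have already arranged simultaneous transversality for each decomposition) together with the semi-positivity hypothesis of Definition~\ref{defn: semi-positive manifolds} -- which forbids simple $J$-holomorphic classes with $3-n\leq c_1(B)<0$ -- gives each such stratum codimension at least $2$ inside the top stratum. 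This is the key semi-positivity input, and the whole argument is identical to the closed semi-positive case.

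For the bordism-invariance claim I would connect two regular choices $(J_0,\KL{H}_0)$ and $(J_1,\KL{H}_1)$ by a generic path $t \mapsto (J_t,\KL{H}_t)$; the parametrized moduli space $\CL{K}(A,(J_t,\KL{H}_t)_{t\in I})$ from Theorem~\ref{thm:moduli of perturbed spheres is a manifold} is a smooth oriented $(2n+2c_1(A)+1)$-manifold whose boundary is the required difference $\CL{M}(A,J_1,\KL{H}_1)\cup -\CL{M}(A,J_0,\KL{H}_0)$. Applying the three checks above to the parametrized evaluation map $\widetilde{\mathrm{ev}}:\CL{K}\to M^m$ -- noting that each cusp-curve stratum in a one-parameter family gains exactly one free parameter and therefore retains codimension at least $1$, as required by Definition~\ref{defn: pseudocycles} for a bordism -- produces the required interior bordism. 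Independence of $\mathbf{z}$ is obtained by the same device applied to a generic path in the connected configuration space of $m$-tuples of distinct points in $\mathbb{S}^2$.
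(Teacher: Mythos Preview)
The paper does not supply its own proof of this theorem; it simply records the statement and defers to \cite[Section~3]{Ruan} and \cite[Items~8.5.1--8.5.4]{MS3}, relying on the preparatory results of Section~\ref{subsection: J-holomorphic spheres} (in particular Corollaries~\ref{corollary:J-spheres are bounded away from the boundary} and~\ref{cor:perturbed J-spheres are bounded away from the boundary}) to justify that the closed-manifold arguments carry over. Your proposal is a correct and reasonably complete outline of precisely that argument---Gromov compactness plus convexity for the first two pseudocycle conditions, the semi-positivity stratum count for $\dim\Omega_{\mathrm{ev}}$, and the parametrized moduli space for bordism invariance---so it matches the paper's (implicit) approach.
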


\subsection{Invariants $GW_{A,p,m}$.}
Following \cite{Ruan}, \cite{MS2}, \cite{MS3} we shall define  two types of genus zero Gromov-Witten invariants. But first, let us recall the definition of intersection products from  algebraic topology. Recall that  $\FF$  is  either $\ZZ_2$ or $\QQ$.
\begin{defn}\label{defn: Intersection products}
\mbox\\
Homomorphisms
\bea\label{equation: intersection products in homology}
&\bullet_1:H_i(M;\FF)\otimes H_j(M;\FF)\to H_{i+j-2n}(M;\FF)\\
&\bullet_2:H_i(M;\FF)\otimes H_j(M,\partial M;\FF)\to H_{i+j-2n}(M;\FF)\\
&\bullet_3:H_i(M,\partial M;\FF)\otimes H_j(M,\partial M;\FF)\to H_{i+j-2n}(M,\partial M;\FF)
\eea
given by
\bea\label{equation: formula of intersection products in homology}
&a\bullet_1 b:=\mathrm{PLD}_2\(\mathrm{PLD}_2^{-1}(b)\cup \mathrm{PLD}_2^{-1}(a)\)\\
&a\bullet_2 b:=\mathrm{PLD}_2\(\mathrm{PLD}_2^{-1}(b)\cup \mathrm{PLD}_1^{-1}(a)\)\\
&a\bullet_3 b:=\mathrm{PLD}_1\(\mathrm{PLD}_1^{-1}(b)\cup \mathrm{PLD}_1^{-1}(a)\)
\eea
are called the intersection products in homology. Here
\bea\label{equation:  Poincare-Lefschetz duality isomorphisms}
&\mathrm{PLD}_1:H^j(M;\FF)\to H_{2n -j}(M,\partial M;\FF)\\
&\mathrm{PLD}_2:H^j(M,\partial M;\FF)\to H_{2n -j}(M;\FF)
\eea
are the Poincar\'{e}-Lefschetz duality isomorphisms given by $$\mathrm{PLD}_i(\alpha):=\alpha\cap[M,\partial M],\;\;i=1,2,$$ where $[M,\partial M]$ is the positive generator of $H_{2n}(M,\partial M;\FF)\cong\FF$ --  the relative fundamental class.
\end{defn}

\begin{thm-defn}\label{thm:definition of GW_[m]}(E.g.\cite{Ruan}; \cite{RT}; \cite[ Chapter $7$]{MS2}; \cite[ Chapter  $7$]{MS3}.)\\
Let  $(M,\omega)$ be a $2n$-dimensional compact convex semi-positive  symplectic manifold, $m\in\{3,4\}$ and $p\in\{0,1,\ldots,m\}$. Fix  pairwise distinct marked points $$\mathbf{z}:=(z_1,\ldots,z_m)\in\(\mathbb{S}^2\)^m.$$

\be
\item[$(i)$] Let $A\in H_2^S(M)\minus\{0\}$ and $(J,\KL{H})\in\CL{JH}_{reg}(M,\partial M,\omega,A,\mathbf{z})$. Then the $m$-linear map
    $$GW_{A,p,m}: H_*(M;\FF)^{\times p}\times H_*(M,\partial M;\FF)^{\times (m-p)}\to\FF$$
     given by
     $$GW_{A,p,m}(a_1,\ldots,a_m):=\Psi_{\ev_{\mathbf{z},J,\Kl{H}},p}(a_1,\ldots,a_m)$$
      is well-defined and is independent of the pair $(J,\KL{H})\in\CL{JH}_{reg}(M,\partial M,\omega,A,\mathbf{z})$ and of the tuple $\mathbf{z}$ of fixed marked points.

\item[$(ii)$] If $A\in H_2^S(M)\minus\{0\}$ satisfies $(A_m)$ (see Section \ref{subsection: conditions Am}) then a pair $(J,\KL{H}=0)$ with  $J\in\CL{J}_{reg}(M,\partial M,\omega,A,\mathbf{z})$ belongs to $\CL{JH}_{reg}(M,\partial M,\omega,A,\mathbf{z})$.

\item[$(iii)$]The Gromov-Witten invariant $GW_{A,p,m}$  depends only on the semi-positive deformation class of $\omega$.
\ee
\end{thm-defn}
\begin{proof}
Statements $(i)$ and $(ii)$ follow directly from Theorems~\ref{thm:GW-pseudocycle in the simple case}, \ref{thm:GW-pseudocycle in the perturbed case}. The proof of $(iii)$ repeats verbatim the proof of \cite[Proposition $2.3$]{RT}.
\end{proof}

Since for $A=0$ the above evaluation maps are not  interior pseudocycles, we  define the Gromov-Witten invariants for these classes as follows.

\begin{defn}\label{defn: GW for A in ker h2}
Let  $(M,\omega)$ be a $2n$-dimensional compact convex semi-positive  symplectic manifold. Let $m\in\{3,4\}$ and $p\in\{0,1,\ldots,m\}$. If $$\sum_{i=1}^m\deg(a_i)\neq2n(m-1),$$ define $$GW_{0,p,m}(a_1,\ldots, a_m): =0.$$ Otherwise, define
\beq\label{equation:  GW[p,m]  for A in ker h2 }
\hspace{-10mm}GW_{0,p,m}(a_1,\ldots, a_m): =
            \begin{cases}
            0&\text{ if }\ p=0  ,\\
            (\dots((a_1\bullet_2 a_2)\bullet_2 a_3)\dots)\bullet_2 a_m  &\text{ if }\ p=1,\\
            (\dots((a_1\bullet_1 a_2)\bullet_2 a_3)\dots)\bullet_2 a_m&\text{ if }\ p=2,\\
            (\dots((a_1\bullet_1 a_2)\bullet_1 a_3)\dots)\bullet_2 a_m&\text{ if }\ p=3,\\
            a_1\bullet_1 a_2\bullet_1 a_3\bullet_1 a_4&\text{ if }\ p=4.
            \end{cases}
\eeq
\end{defn}
We see that this definition correlates with the closed case and reflects the behavior of the relative intersection products.\\

The next properties follow immediately from the definition of the Gromov-Witten invariants, see \cite[ Chapter $7$]{MS3}; \cite{Ruan}; \cite{RT}.
\begin{prop}\label{prop: properties of GW invariants}
Let  $(M,\omega)$ be a $2n$-dimensional compact convex semi-positive  symplectic manifold.
\be
\item[$(i)$]Let $m\in\{3,4\}$ and $p\in\{0,1,\ldots,m\}$. Following \cite[ Section $7.5$]{MS3}  for each permutation $\sigma\in S_m$ and each monomial  $$a:=(a_1,\ldots, a_m)\in H_*(M;\FF)^{\times p}\times H_*(M,\partial M;\FF)^{\times (m-p)}$$ denote by $$\ve:=\varepsilon(\sigma;a):=(-1)^{\#\{i<j|\sigma(i)>\sigma(j), \deg(a_i)\cdot\deg(a_j)\in2\zz+1\}}$$ the sign of the induced permutation on the classes of odd degree.  Then
    \bea\label{equation:skewsymmetry of GW}
     GW_{A,p,m}(\sigma_*a)&=\varepsilon GW_{A,p,m}(a),
    \eea
     where $\sigma_*a:=a_{\sigma(1)},\ldots, a_{\sigma(m)}$.
\item[$(ii)$]If $m=3, p\in\{0,1,2\}$ and  $A\in H_2^S(M)\minus\{0\}$,  then
    \beq\label{equation: GW with fundamental class for m=3}
    GW_{A,p,3}(a_1, a_2,  [M,\partial M])=0.
    \eeq
\item[$(iii)$] Let $j_*: H_*(M;\FF)\to H_*(M,\partial M;\FF)$ be the natural homomorphism, induced by the inclusion $(M,\varnothing)\hookrightarrow(M,\partial M)$. Let $m\in\{3,4\}$, $p\in\{1,\ldots,m\}$ and $A\in H_2^S(M)\minus\{0\}$. Then
    \beq\label{equation: GW for j(a1)=0}
GW_{A,p,m}(a_1,\ldots, a_m)=0,
\eeq
for any classes  $\{a_i\}_{i=1}^m$, such that $j_*(a_1)=0$.
\ee
\end{prop}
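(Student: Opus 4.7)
The plan is to derive the three assertions by combining the invariance statements of Theorem-Definition~\ref{thm:definition of GW_[m]} with the fact that nonconstant $(J,\KL{H})$-holomorphic spheres avoid a neighborhood of $\partial M$ (Corollaries~\ref{corollary:J-spheres are bounded away from the boundary} and~\ref{cor:perturbed J-spheres are bounded away from the boundary}). For (i), I invoke the independence of $GW_{A,p,m}$ on the tuple $\mathbf{z}\in(\mathbb{S}^2)^m$. Representing all classes by transverse smooth singular cycles so that the invariant equals the intersection number of $\ev_{\mathbf{z},J,\Kl{H}}$ with the product $f_1\times\cdots\times f_m$ (Theorem~\ref{thm:index of intersection of pseudocycles}), I substitute $\mathbf{z}\rightsquigarrow\mathbf{z}_\sigma:=(z_{\sigma^{-1}(1)},\ldots,z_{\sigma^{-1}(m)})$ and rewrite $GW_{A,p,m}(\sigma_*a)$ so that its underlying set of solutions coincides with that of $GW_{A,p,m}(a_1,\ldots,a_m)$. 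The sign then arises entirely from reordering the factors of the source product: a transposition of $V_i$ with $V_j$ changes its orientation by $(-1)^{\dim V_i\cdot\dim V_j}=(-1)^{\deg(a_i)\deg(a_j)}$, and the Koszul count of odd-odd inversions of $\sigma$ is precisely $\varepsilon(\sigma;a)$.

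For (ii), I represent $[M,\partial M]$ by the identity cycle $\id:(M,\partial M)\to(M,\partial M)$. Corollary~\ref{cor:perturbed J-spheres are bounded away from the boundary} forces $u(z_3)\in\overset{\circ}{M}$ automatically, so the third constraint becomes vacuous and $GW_{A,p,3}(a_1,a_2,[M,\partial M])$ collapses to a signed two-point count. When $A$ satisfies $(A_3)$, I take $\KL{H}=0$ and use that the $2$-real-dimensional stabilizer $\mathrm{Stab}(z_1,z_2)\cong\CC^*\subset PSL_2(\CC)$ acts freely on $\CL{M}_s(A,J)$ while leaving $\ev_{(z_1,z_2),J}$ invariant; the two-point evaluation therefore factors through a quotient of dimension $2n+2c_1(A)-2$, so after the transverse perturbation of Theorem~\ref{thm:index of intersection of pseudocycles}(i) the intersection of $\ev_{(z_1,z_2),J}$ with $f_1\times f_2(V_1\times V_2)$ has strictly negative expected dimension and is therefore empty. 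For $A$ not satisfying $(A_3)$, I transfer the same conclusion to the perturbed setting via the $(J,\KL{H})$-independence of $GW_{A,p,3}$, deforming $\KL{H}\rightsquigarrow 0$ through $\CL{JH}_{reg}$ while the convexity estimates of Section~\ref{subsection: non-perturbed J-spheres} prevent boundary-touching degenerations, or via an explicit forgetful-marked-point argument on pseudocycles in the spirit of Ruan~\cite{Ruan}.

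For (iii), the long exact sequence of $(M,\partial M)$ identifies $\ker(j_*)$ with $\operatorname{Im}(i_*:H_*(\partial M;\FF)\to H_*(M;\FF))$, so $j_*(a_1)=0$ forces $a_1$ to admit a smooth singular representative $f_1:V_1\to M$ with $f_1(V_1)\subseteq\partial M$. Since $A\neq 0$, every $u\in\CL{M}(A,J,\KL{H})$ is nonconstant, and Corollary~\ref{cor:perturbed J-spheres are bounded away from the boundary} places $u(z_1)$ strictly inside $\overset{\circ}{M}$, contradicting the requirement $u(z_1)\in f_1(V_1)\subseteq\partial M$. The intersection defining the invariant is therefore empty and $GW_{A,p,m}(a_1,\ldots,a_m)=0$.

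The hardest step I anticipate is (ii) in the perturbed regime: the $PSL_2(\CC)$-equivariance powering the dimension reduction is destroyed by any nonzero $\KL{H}$, so the vanishing has to be propagated back through $\KL{H}$-bordisms or a genuine forgetful construction. The key technical check is that all such manipulations stay within the subspace of convex-compatible data, so that the convexity estimates continue to keep the relevant Gromov-compactified moduli spaces away from $\partial M$.
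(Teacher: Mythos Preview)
The paper does not actually prove this proposition: the sentence immediately preceding it reads ``The next properties follow immediately from the definition of the Gromov-Witten invariants, see \cite[Chapter~7]{MS3}; \cite{Ruan}; \cite{RT}.'' So you are supplying an argument where the paper supplies only citations.

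Your treatments of (i) and (iii) are correct and are essentially the standard arguments implicit in those references. For (i), the invariance under the choice of $\mathbf{z}$ together with the Koszul sign from reordering the factors $V_i$ is exactly the right mechanism (with the implicit understanding that $\sigma$ preserves the partition into absolute and relative slots, else $\sigma_*a$ does not lie in the domain of $GW_{A,p,m}$). For (iii), the exact-sequence identification $\ker j_*=\operatorname{Im} i_*$ lets you push the representing cycle of $a_1$ into $\partial M$; since the evaluation pseudocycle is \emph{interior} (its closure lies in $\overset{\circ}{M}{}^m$), the transversality conditions of Definition~\ref{defn: transversale pseudocycles} are satisfied vacuously and the intersection is empty.

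There is, however, a genuine gap in your handling of (ii) when $A$ fails $(A_3)$. Your first proposed route---``deforming $\KL{H}\rightsquigarrow 0$ through $\CL{JH}_{reg}$''---is exactly what cannot be done: the failure of $(A_3)$ is precisely the obstruction to $(J,0)$ lying in $\CL{JH}_{reg}(M,\partial M,\omega,A,\mathbf{z})$, so there is no regular endpoint to deform to, and Theorem~\ref{thm:moduli of perturbed spheres is a manifold} gives no cobordism. Your fallback (``an explicit forgetful-marked-point argument \`a la Ruan'') is the correct idea, but it is not free: one must choose the Hamiltonian perturbations coherently with the map that forgets $z_3$ (e.g.\ perturbations supported away from a disc around $z_3$, compatible under the stabilization/forgetful maps), and then check that the resulting two-point evaluation pseudocycle has bordism class independent of that extra choice. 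This is done in \cite{Ruan} and in \cite[Chapter~8]{MS3}, and carries over verbatim here thanks to Corollary~\ref{cor:perturbed J-spheres are bounded away from the boundary}; but since you flagged this as the hardest step, you should state explicitly that you are invoking that coherent-perturbation machinery rather than a nonexistent homotopy to $\KL{H}=0$.
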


\subsection{Calculation of Gromov-Witten invariants}
In the following examples we shall calculate the Gromov-Witten invariants over $\FF=\QQ$ of the symplectic blow-up $(\til{X}\cong X\#\cl{\CC\PP^2}, \sigma_{\delta})$ of an interior point in some  compact convex semi-positive  symplectic manifolds $(X,\sigma)$, i.e. the symplectic blow up of size $\delta$ that corresponds to a symplectic embedding
$$
\psi:\(\BB^{4}(\delta)\subseteq\CC^2, \omega_0\)\to(X,\sigma),
$$
see \cite[ Section $7.1$]{MS} or \cite[ Section $9.3$]{MS3}  for the precise definition of the symplectic blow-up $(\til{X}, \sigma_{\delta})$.

\begin{ex}\label{example: GW of blowup of a 4-ball}
\mbox{}\\ Let  $(\til{\BB^4} , \omega_{\delta})$ be the symplectic blow-up at zero of the standard unit $4$-ball in $(\CC^2,\omega_0)$ of a  small size $0<\delta<<1$. It is a  $4$-dimensional compact convex symplectic manifold equipped  with an  $\omega$-compatible almost complex structure $\til{J_0}$, where $J_0$ is the standard complex structure (multiplication by $\sqrt{-1}$) on $\CC^2$. Let $E\in H_2(\til{\BB^4};\CC)$ be the class of the exceptional divisor. It can be realized by a $\til{J_0}$-holomorphic embedding  $\iota:\CC\PP^1\to \til{\BB^4}$, i.e.  $E=[\iota(\CC\PP^1)]=\iota_*[\CC\PP^1]$ and $\omega_{\delta}(E)=\delta$. Set $C:=\iota(\CC\PP^1)$. A standard calculation shows that $c_1(E)=1$.
In particular, the minimal Chern number $N$ of $\til{\BB^4}$  (see Section 1.2) equals $1$ and the class $E$ satisfies condition $(A_3)$, (see Section \ref{subsection: conditions Am}). Note that
\beq\label{equation: absolute homology of BCP}
H_i(\til{\BB^4};\QQ)\cong
\begin{cases}
\mathrm{Span}_{\qq}([pt])\cong\QQ&,i=0,\\
0&,i=1,\\
\mathrm{Span}_{\qq}(E)\cong\QQ&,i=2,\\
0&,i=3,\\
0&,i=4.
\end{cases}
\eeq
\beq\label{equation: relative homology of BCP}
H_i(\til{\BB^4},\partial \til{\BB^4};\QQ)\cong
\begin{cases}
0&,i=0,\\
0&,i=1,\\
\mathrm{Span}_{\qq}(E^{\vee})\cong\QQ&,i=2,\\
0&,i=3,\\
\mathrm{Span}_{\qq}([\til{\BB^4},\partial \til{\BB^4}])\cong\QQ&,i=4.
\end{cases}
\eeq
\mbox{}\\
By \cite[ Section $3.3$ and Lemma $7.1.8$]{MS3}, we can use the almost complex structure $\til{J_0}$ to calculate the invariants $GW_{E,p,3}$. By the non-negativity of intersection in 4-dimensional almost complex manifolds, cf. \cite[ Theorem $2.6.3$]{MS3},  the moduli space $\CL{M}(E,\til{J_0})$ consists of a single $\til{J_0}$-holomorphic curve up to a reparametrization, namely the exceptional sphere itself. Take $\{[pt], E\}$ and $ \{E^{\vee},[\til{\BB^4},\partial \til{\BB^4}]\}$ to be homogeneous bases  for $H_*(\til{\BB^4};\CC)$ and $H_*(\til{\BB^4},\partial \til{\BB^4};\CC)$ respectively. Then $$j_*([pt])=0, E\bullet_1 E=-1, E\bullet_2 E^{\vee}=1,$$ where  $j_*: H_*(\til{\BB^4};\QQ)\to H_*(\til{\BB^4},\partial \til{\BB^4};\QQ)$ is the natural homomorphism induced by the inclusion. Thus, the only non-zero Gromov-Witten invariants for the class $E$ and the basic homology classes are
\bea\label{equation: GW of  BCP }
&GW_{0,1,3}(E, E^{\vee}, [\til{\BB^4},\partial \til{\BB^4}])=(E\bullet_2E^{\vee})\bullet_2 [\til{\BB^4},\partial \til{\BB^4} ]=1,\\
&GW_{0,2,3}(E, E, [\til{\BB^4},\partial \til{\BB^4}])=(E\bullet_1E)\bullet_2 [\til{\BB^4},\partial \til{\BB^4}]=-1,\\
&GW_{E,0,3}(E^{\vee}, E^{\vee}, E^{\vee})=(E\bullet_2E^{\vee})^3=1,\\
&GW_{E,1,3}(E, E^{\vee}, E^{\vee})=(E\bullet_2E^{\vee})^2(E\bullet_1E)=-1,\\
&GW_{E,2,3}(E, E, E^{\vee})=(E\bullet_2E^{\vee})(E\bullet_1E)^2=1,\\
&GW_{E,3,3}(E, E, E)=(E\bullet_1E)^3=-1.
\eea
By \cite[Theorem $2.6.4$]{MS3}  and \cite[Proposition $2.3$]{H-S}, we have that for all $d\in\ZZ\minus\{0,1\}$ the moduli space $\CL{M}_s(dE,J)$ is empty for any regular $J\in\CL{J}_{reg}(dE)$. Hence $GW_{dE,p,3}=0$ for all $p\in\{0,\ldots,3 \}$ and for all $d\in\ZZ\minus\{0,1\}$.
\end{ex}

\begin{ex}\label{example: GW of blowup of a  cotangent disk bundle of the surface}
\mbox{}\\ Let $\Sigma$ be a smooth closed surface of genus $g\geq 0$ and $G$ -- a fixed Riemannian metric on $\Sigma$. Consider the  cotangent unit disk bundle $X:=\DD T^*\Sigma$ w.r.t. $G$  equipped with the canonical symplectic form $\omega_{can}:=d\lambda_{can}$. The metric $G$ induces a horizontal-vertical splitting of $TT^*\Sigma\overset{G}{\cong}T\Sigma\oplus T^*\Sigma$ and as a result, it induces an $\omega_{can}$-compatible almost complex structure $J_G$ on $X$, which in the horizontal-vertical splitting takes the form
$$J_G=
\begin{pmatrix}
0&-\Id\\\Id&0
\end{pmatrix}.
$$
Let $(\til{X}, \omega_{\delta})$  be the symplectic blow-up of $X$ of a sufficiently small size $0<\delta<<1$, so that the corresponding symplectic $\delta$-ball lies in $\overset{\circ}{X}$
and  doesn't intersect the zero section $\Sigma$ of $X$.
Then $(\til{X}, \omega_{\delta})$ is a  $4$-dimensional compact convex symplectic manifold equipped  with an $\omega_{\delta}$-compatible almost complex structure $\til{J_G}$. Let $E\in H_2(\til{X};\QQ)$ be the class of the exceptional divisor. It is realized by a $\til{J_G}$-holomorphic embedding  $\iota:\CC\PP^1\to \til{X}$ and $\omega_{\delta}(E)=\delta$.
The minimal Chern number $N$  of $\til{X}$ equals $1$, and the class $E$ satisfies condition $(A_3)$. We also have
\beq\label{equation: absolute homology of DT*SCP}
H_i(\til{X};\QQ)\cong
\begin{cases}
\mathrm{Span}_{\qq}([pt])\cong\QQ&,i=0,\\
H_1(\Sigma;\QQ)=\begin{cases}0&, g=0\\ \mathrm{Span}_{\qq}([a_1],\ldots, [a_g], [b_1]\ldots, [b_g])\cong\QQ^{2g}&, g\geq 1\end{cases}&,i=1,\\
H_2(\Sigma;\QQ)\oplus H_2(\CC\PP^1;\QQ)\cong\mathrm{Span}_{\qq}([\Sigma])\oplus\mathrm{Span}_{\qq}(E)\cong\QQ^2&,i=2,\\
0&,i=3,\\
0&,i=4.
\end{cases}
\eeq

Denote by $[F]\in H_2(X,\partial X;\QQ)$ the class of the fiber $F\cong\DD^2$ of the fibration $X=\DD T^*\Sigma\to\Sigma$. Hence,  $[\Sigma]^{\vee}=[F]$,  $[a_i]^{\vee}=[X|_{b_i}]$, $[b_i]^{\vee}=-[X|_{a_i}]$ for $i=1,\ldots, g$. It follows that
\beq\label{equation: relative homology of  DT*SCP }
H_i(\til{X},\partial \til{X};\QQ)\cong
\begin{cases}
0&,i=0,\\
0&,i=1,\\
\mathrm{Span}_{\qq}([F])\oplus\mathrm{Span}_{\qq}(E^{\vee})\cong\QQ^2&,i=2,\\
\begin{cases}0&, g=0\\ \mathrm{Span}_{\qq}([a_1]^{\vee},\ldots, [a_g]^{\vee}, [b_1]^{\vee}\ldots, [b_g]^{\vee})\cong\QQ^{2g}&, g\geq 1\end{cases}&,i=3,\\
\mathrm{Span}_{\qq}([\til{X},\partial \til{X}])\cong\QQ&,i=4.
\end{cases}
\eeq
In addition, we have
\bea\label{equation: intersections of the basic classes in DT*SCP }
&j_*([pt])=0,\\
&E\bullet_1 E=-1,\;\; [\Sigma]\bullet_1[\Sigma]=2-2g,\;\; E\bullet_2 E^{\vee}=1,\\
&[\Sigma]\bullet_1 E=[F]\bullet_2 E=[a_i]\bullet_2 E=[b_i]\bullet_2 E=0,\\
&[a_i]\bullet_1[b_j]= [a_i]\bullet_1[a_j]=[b_i]\bullet_1[b_j]=[a_i]\bullet_2[b_j]^{\vee}=[b_i]\bullet_2[a_j]^{\vee}=0,\\
& [a_i]\bullet_2[a_j]^{\vee}=[b_i]\bullet_2[b_j]^{\vee}=\delta_{ij}.
\eea
By \cite[ Section $3.3$ and Lemma $7.1.8$]{MS3}, we can use the almost complex structure $\til{J_G}$ to calculate the invariants $GW_{E,p,3}$. By the non-negativity of intersection in 4-dimensional almost complex manifolds, cf. \cite[ Theorem $2.6.3$]{MS3}, we have that the moduli space $\CL{M}(E,\til{J_G})$ consists of a single $\til{J_G}$-holomorphic curve up to a reparametrization, namely, the exceptional sphere itself.  By \cite[ Theorem $2.6.4$]{MS3}  and \cite[Proposition $2.3$]{H-S}, we have that for all $d\in\ZZ\minus\{0,1\}$ the moduli space $\CL{M}_s(dE,J)$ is empty for any regular $J\in\CL{J}_{reg}(dE)$.  As a result, the only non-zero invariants on the basic classes are

\hspace{-5mm}\bea\label{equation: GW of  DTSCP }
&GW_{0,1,3}([a_i], [a_j]^{\vee}, [\til{X},\partial \til{X}])=[a_i]\bullet_2[a_j]^{\vee}=\delta_{ij},\\
&GW_{0,1,3}([b_i], [b_j]^{\vee}, [\til{X},\partial \til{X}])=[b_i]\bullet_2[b_j]^{\vee}=\delta_{ij},\\
&GW_{0,1,3}([\Sigma], [a_i]^{\vee}, [b_j]^{\vee}])=([\Sigma]\bullet_2[a_i]^{\vee})\bullet_2 [b_j]^{\vee} =[b_i]\bullet_2[b_j]^{\vee}=\delta_{ij},\\
&GW_{0,2,3}([\Sigma], [\Sigma],  [\til{X},\partial \til{X}])=[\Sigma]\bullet_1[\Sigma]=2-2g,\\
&GW_{0,2,3}(E, E,  [\til{X},\partial \til{X}])=E\bullet_1E=-1,\\
&GW_{E,0,3}(E^{\vee}, E^{\vee}, E^{\vee})=(E\bullet_2E^{\vee})^3=1,\\
&GW_{E,1,3}(E, E^{\vee}, E^{\vee})=(E\bullet_2E^{\vee})^2(E\bullet_1E)=-1,\\
&GW_{E,2,3}(E, E, E^{\vee})=(E\bullet_2E^{\vee})(E\bullet_1E)^2=1,\\
&GW_{E,3,3}(E, E, E)=(E\bullet_1E)^3=-1.
\eea
\end{ex}

\section{Quantum homology}
Let  $(M,\omega)$ be a $2n$-dimensional compact convex semi-positive  symplectic manifold. Following  \cite[ Chapter $11$]{MS3} we use the Gromov-Witten invariants $GW_{A,p,m}$ to deform the classical intersection products $\bullet_i, i=1,2,3$; cf. Definition~\ref{defn: Intersection products}. These deformations give rise to new ring structures on the absolute ( that is $H_*(M)$) and the relative (that is $H_*(M,\partial M)$) homology algebras of $(M,\omega)$. From now on we shall consider the following Novikov ring $\Lambda$. Let
\beq\label{equation: Gamma group of spherical classes}
\Gamma:=\Gamma(M,\omega):=\frac{H_2^S(M)}{\ker( c_1)\cap \ker(\omega)}
\eeq
and let
\beq\label{equation: G group of periods}
G:=G(M,\omega):=\frac12\omega\(H_2^S(M)\)\subseteq\RR
\eeq
be the subgroup of half-periods of the symplectic form $\omega$ on spherical homology classes. Recall that $\FF$ denotes a base field, which in our case will be either $\QQ$ or $\ZZ_2$. Let $s$ be a formal variable. Define the field $\KK_G$ of generalized Laurent series in  $s$ over $\FF$ of the form
\beq\label{equation:  field KG of generalized Laurent series}
\KK_G:=\left\{f(s)=\sum\limits_{\alpha\in G}z_{\alpha}s^{\alpha}, z_{\alpha}\in\FF|\ \#\{\alpha>c|z_{\alpha}\neq 0\}<\infty,\ \forall c\in\RR \right\}
\eeq

\begin{defn}\label{defn: Novikov ring Lambda}
Let $q$ be a formal variable. The \textbf{Novikov ring $\Lambda:=\Lambda_G$} is the ring of polynomials in $q,q^{-1}$ with coefficients in the field $\KK_G$, i.e.
\beq\label{equation:  Novikov ring}
\Lambda:=\Lambda_G:=\KK_G[q,q^{-1}].
\eeq
We equip the ring $\Lambda_G$ with the structure of a graded ring by setting $\deg(s)=0$ and $\deg(q)=1$.  We shall denote by $\Lambda_k$ the set of  elements of $\Lambda$ of degree $k$. Note that  $\Lambda_0=\KK_G$.
\end{defn}

\subsection{Quantum homology}\label{subsection: QH}
The  quantum homology  $QH_*(M)$ and the relative  quantum homology  $QH_*(M,\partial M)$ are defined as follows. As  modules, they are graded modules over $\Lambda$ defined by
$$QH_*(M):=H_*(M;\FF)\otimes_{\ff}\Lambda,\;\;QH_*(M,\partial M):=H_*(M,\partial M;\FF)\otimes_{\ff}\Lambda.$$ A grading on both modules is given by $$\deg(a\otimes zs^{\alpha}q^m)=\deg(a)+m,$$  and for $k\in\ZZ$ the degree-$k$ parts of $QH_*(M)$ and of  $QH_*(M,\partial M)$ are given by
\bean&QH_k(M):=\bigoplus_iH_i(M;\FF)\otimes_{\ff}\Lambda_{k-i},\\
&QH_k(M,\partial M):=\bigoplus_iH_i(M,\partial M;\FF)\otimes_{\ff}\Lambda_{k-i}
\eea
respectively.  \par Next, we define the quantum products $\ast_i, i=1,2,3$, which are the deformations of  the classical intersection products $\bullet_i, i=1,2,3$. Choose a homogeneous basis  $\{e_k\}_{k=1}^{d}$ of $H_*(M;\FF)$, such that $e_1=[pt]\in H_0(M;\FF)$. Let  $\{e^{\vee}_k\}_{k=1}^{d}$ be the dual homogeneous basis of $H_*(M,\partial M;\FF)$ defined by $$\la e_i, e^{\vee}_j \ra=\delta_{ij},$$ where $\la\cdot,\cdot\ra$ is the Kronecker pairing.

\begin{defn}\label{defn: quantum products}
Let $A\in H_2^S(M)$ and let $[A]\in\Gamma$ be the image of $A$ in $\Gamma$.
\item[$(i)$] A bilinear homomorphism of $\Lambda$-modules $$\ast_1:QH_*(M)\times QH_*(M)\to QH_*(M)$$ is given as follows.
If $a\in H_i(M;\FF)$ and $b\in H_j(M;\FF)$ then
$$QH_{i+j-2n}(M)\ni a\ast_1 b:=\sum_{[A]\in\Gamma}(a\ast_1 b)_{[A]}\otimes s^{-\omega(A)}q^{-2c_1(A)},$$
where $(a\ast_1 b)_{[A]}\in H_{i+j-2n+2c_1(A)}(M;\FF)$ is given by $$(a\ast_1 b)_{[A]}=\sum_{i=1}^d\sum_{A'\in[A]}GW_{A',2,3}(a,b,e^{\vee}_i)e_i.$$
We extend this definition by $\Lambda$-linearity to the whole $QH_*(M)\times QH_*(M)$.

\item[$(ii)$] A bilinear homomorphism of $\Lambda$-modules $$\ast_2:QH_*(M)\times QH_*(M,\partial M)\to QH_*(M)$$ is given as follows.
If $a\in H_i(M;\FF)$ and $b\in H_j(M,\partial M;\FF)$ then
$$QH_{i+j-2n}(M)\ni a\ast_2 b:=\sum_{[A]\in\Gamma}(a\ast_2 b)_{[A]}\otimes s^{-\omega(A)}q^{-2c_1(A)},$$
where $(a\ast_2 b)_{[A]}\in H_{i+j-2n+2c_1(A)}(M;\FF)$ is given by $$(a\ast_2 b)_{[A]}=\sum_{i=1}^d\sum_{A'\in[A]}GW_{A',1,3}(a,b,e^{\vee}_i)e_i.$$
We extend this definition by $\Lambda$-linearity to the whole $QH_*(M)\times QH_*(M,\partial M)$.

\item[$(iii)$] A bilinear homomorphism of $\Lambda$-modules $$\ast_3:QH_*(M,\partial M)\times QH_*(M,\partial M)\to QH_*(M,\partial M)$$ is given as follows. If $a\in H_i(M,\partial M;\FF)$ and $b\in H_j(M,\partial M;\FF)$ then
    $$QH_{i+j-2n}(M,\partial M)\ni a\ast_3 b:=\sum_{[A]\in\Gamma}(a\ast_3 b)_{[A]}\otimes s^{-\omega(A)}q^{-2c_1(A)},$$
    where $(a\ast_3 b)_{[A]}\in H_{i+j-2n+2c_1(A)}(M,\partial M;\FF)$ is given by $$(a\ast_3 b)_{[A]}=\sum_{i=1}^d\sum_{A'\in[A]}GW_{A',1,3}(a,b,e_i)e^{\vee}_i.$$
We extend this definition by $\Lambda$-linearity to the whole $QH_*(M,\partial M)\times QH_*(M,\partial M)$.
\end{defn}
\begin{rem}
By the Gromov compactness theorem, the sums $\displaystyle \sum_{A'\in[A]}GW_{A',2,3}(a,b,e^{\vee}_i)$, $\displaystyle\sum_{A'\in[A]}GW_{A',1,3}(a,b,e^{\vee}_i)$, $\displaystyle\sum_{A'\in[A]}GW_{A',1,3}(a,b,e_i)$ are finite, see \cite[Corollary $5.3.2$]{MS3}.
\end{rem}
The next theorem summarizes the main properties of these quantum products.

\begin{thm}\label{thm:properties of quantum products}(E.g. \cite[ Proposition $11.1.9$]{MS3}.)
\item[$(i)$] The quantum products $\ast_l, l=1,2,3$, are distributive over addition and super-commu-\\tative in the sense that
\begin{equation}\label{equation: super-commutativity of quantum products}
(a\otimes 1)\ast_l(b\otimes 1)=(-1)^{\deg(a)\deg(b)}(b\otimes 1)\ast_l(a\otimes 1)
\end{equation}
for $ l=1,2,3$ and for elements $a,b\in H_*(M;\FF)\cup H_*(M,\partial M;\FF)$ of pure degree.
\item[$(ii)$] The quantum products $\ast_l, l=1,2,3$, commute with the action of $\Lambda$, i.e. for any $\lambda\in\Lambda$ and for any $a,b\in QH_*(M)\cup QH_*(M,\partial M)$
\beq\label{equation: quantum product commutes with Lambda action}
\lambda(a\ast_lb)=(\lambda a)\ast_lb=a\ast_l (\lambda b)=(a\ast_l b)\lambda.
\eeq
\item[$(iii)$]  The quantum products $\ast_l, l=1,2,3$, are associative in the sense that
\bea\label{equation: associativity of quantum product }
&(a\ast_1 b)\ast_1c=a\ast_1 (b\ast_1c),\ \forall a, b, c\in QH_*(M),\\
&(a\ast_1 b)\ast_2c=a\ast_1 (b\ast_2c),\ \forall a, b\in QH_*(M),\ \forall  c\in QH_*(M,\partial M),\\
&(a\ast_2 b)\ast_2c=a\ast_2 (b\ast_3c),\ \forall a\in QH_*(M),\ \forall b, c\in QH_*(M,\partial M),\\
&(a\ast_3 b)\ast_3c=a\ast_3 (b\ast_3c),\ \forall a, b, c\in QH_*(M,\partial M).
\eea
\item[$(iv)$]The zero class term in the quantum products $\ast_l, l=1,2,3$, is the classical intersection products $\bullet_l, l=1,2,3$, i.e.
\beq\label{equation: zero class term is intersection product}
(a\ast_l b)_{A=0}=a\bullet_l b\otimes 1,\ \forall  a,b\in QH_*(M)\cup QH_*(M,\partial M).
\eeq
\item[$(v)$] The relative fundamental class $[M,\partial M]\otimes 1$ is the unit for $\ast_l, l=2,3$, i.e.
\bea\label{equation: fundamental class is the unit}
&a\ast_2 [M,\partial M]\otimes 1=a,\ \forall  a\in QH_*(M),\\
&a\ast_3 [M,\partial M]\otimes 1=a,\ \forall  a\in QH_*(M,\partial M).
\eea
\end{thm}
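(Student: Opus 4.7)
The plan is to verify each of the five items by reducing to properties of the Gromov-Witten invariants $GW_{A,p,m}$ established in Proposition~\ref{prop: properties of GW invariants} together with the corresponding classical identities for $\bullet_i$. Items $(i)$, $(ii)$, $(iv)$ are essentially formal consequences of the definitions. For $(i)$, the quantum product $\ast_l$ is built class-by-class from $GW_{A,p,3}(a,b,e^{\vee}_i)$ (or $GW_{A,p,3}(a,b,e_i)$), and the skew-symmetry property \eqref{equation:skewsymmetry of GW} applied to the transposition of the first two arguments yields $(a\ast_l b)_{[A]}=(-1)^{\deg(a)\deg(b)}(b\ast_l a)_{[A]}$ for each $A$; summing over classes gives super-commutativity. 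For $(ii)$, the definition already extends $\Lambda$-linearly, so one only has to check that the monomial weights $s^{-\omega(A)}q^{-2c_1(A)}$ appear multiplicatively, which is immediate. For $(iv)$, the class $[A]=0$ contribution is exactly $GW_{0,p,3}$, and Definition~\ref{defn: GW for A in ker h2} was set up so that $GW_{0,2,3}(a,b,e^{\vee}_i)=(a\bullet_1 b)\bullet_2 e^{\vee}_i$ etc.; combined with the fact that $\sum_i(a\bullet_l b \bullet_m e^{\vee}_i)\,e_i = a\bullet_l b$ (the expansion in the dual basis paired by the Kronecker pairing agreeing with $\bullet_l$), one reads off $a\bullet_l b\otimes 1$ as claimed.

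For $(v)$, the zero-class term of $a\ast_2([M,\partial M]\otimes 1)$ equals $a\bullet_2[M,\partial M]\otimes 1=a$ since $[M,\partial M]$ is the classical unit for $\bullet_2$, and the same holds for $\ast_3$. The higher class terms vanish: using the skew-symmetry \eqref{equation:skewsymmetry of GW} to transport $[M,\partial M]$ to the last slot (the sign is $+1$ since $\deg[M,\partial M]=2n$ is even), then applying \eqref{equation: GW with fundamental class for m=3}, we obtain $GW_{A,1,3}(a,[M,\partial M],e^{\vee}_i)=0$ and $GW_{A,1,3}(a,[M,\partial M],e_i)=0$ for every $A\neq 0$. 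Hence only the $A=0$ summand survives.

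The main obstacle is item $(iii)$, associativity. The strategy is the standard Kontsevich--Manin composition argument, which I would adapt to the convex/relative setting. Fix three marked points and vary a fourth, giving a one-parameter family $\mathbf{z}(t)\in(\mathbb{S}^2)^4$ whose cross-ratio degenerates, as $t\to 0$ and $t\to 1$, to two distinct nodal configurations (marked points distributed as $\{1,2\}\,|\,\{3,4\}$ and $\{1,4\}\,|\,\{2,3\}$). The central task is to show that a four-point invariant $GW_{A,p,4}(a,b,c,d)$, defined from the pseudocycle of Theorem~\ref{thm:GW-pseudocycle in the perturbed case}, is independent of the cross-ratio, and, by the Gromov compactness theorem applied near the degeneration loci (which, by Corollary~\ref{cor:perturbed J-spheres are bounded away from the boundary}, remains interior), equals the splitting
\[
\sum_{[A_1]+[A_2]=[A]}\sum_i GW_{A_1,\ast,3}(a,b,e^{\vee}_i)\,GW_{A_2,\ast,3}(e_i,c,d)
\]
with appropriate choice of $p$'s and of bases $\{e_i\},\{e^{\vee}_i\}$ dictated by which two classes in $\{a,b,c,d\}$ are absolute versus relative. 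By the construction of $\ast_l$ in terms of $\{e_i,e^{\vee}_i\}$ and the Kronecker pairing, the two nodal degenerations translate into the two parenthesizations $(a\ast b)\ast c$ and $a\ast(b\ast c)$ in each of the four cases of \eqref{equation: associativity of quantum product }, with the correct indices $l$ determined by the absolute/relative nature of the inserted dual basis.

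The analytic core of this argument is the splitting/gluing theorem for $(J,\KL{H})$-holomorphic spheres; the hard part will be to verify that all the gluing and transversality arguments from the closed semi-positive case (cf.\ \cite[Chapter 11]{MS3} and \cite[Section 3]{Ruan}) go through verbatim in the convex setting, with the crucial new input being that near the nodal boundary the resulting reducible configurations have all components interior, thanks to Corollary~\ref{corollary:J-spheres are bounded away from the boundary} and Corollary~\ref{cor:perturbed J-spheres are bounded away from the boundary}. Once this is granted, the four associativity identities follow by bookkeeping of the pairings; the appearance of both $\ast_2$ factors in the third identity of \eqref{equation: associativity of quantum product } corresponds precisely to the combinatorial fact that the composition of one relative and one absolute insertion at the node pairs through $\langle e_i,e^{\vee}_j\rangle=\delta_{ij}$.
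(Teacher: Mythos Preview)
The paper does not give its own proof of this theorem; it simply records the citation to \cite[Proposition $11.1.9$]{MS3} and moves on. Your proposal is precisely the standard argument that citation points to, adapted to the convex/relative setting, and the adaptation is handled correctly: items $(i)$, $(ii)$, $(iv)$ are formal from Definition~\ref{defn: quantum products} and Proposition~\ref{prop: properties of GW invariants}$(i)$ together with Definition~\ref{defn: GW for A in ker h2}; item $(v)$ follows from \eqref{equation: GW with fundamental class for m=3} after permuting $[M,\partial M]$ into the last slot; and item $(iii)$ is the Kontsevich--Manin splitting/composition law, whose analytic input (Gromov compactness and gluing for $(J,\KL{H})$-holomorphic spheres) carries over from the closed semi-positive case because Corollaries~\ref{corollary:J-spheres are bounded away from the boundary} and~\ref{cor:perturbed J-spheres are bounded away from the boundary} force every component of every limiting stable map to stay in a fixed compact subset of $\overset{\circ}{M}$.

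One small notational point worth tightening in $(v)$ for $\ast_3$: the expression $GW_{A,1,3}(a,[M,\partial M],e_i)$ that appears in your argument has the single absolute class $e_i$ in the third slot, whereas the paper's convention places the $p$ absolute arguments first. What you are really invoking is \eqref{equation:skewsymmetry of GW} to rewrite it (up to sign) as $GW_{A,1,3}(e_i,a,[M,\partial M])$, after which \eqref{equation: GW with fundamental class for m=3} applies with $p=1$. This is implicit in what you wrote, but making the permutation explicit avoids any ambiguity about the domain of $GW_{A,1,3}$.
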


\begin{cor}\label{cor: algebraic structure of quantum homology}
\item[$(i)$]The triple $(QH_*(M), +, \ast_1)$ has the structure of a non-unital associative $\Lambda$-algebra.
\item[$(ii)$]The triple $(QH_*(M,\partial M), +, \ast_3)$ has the structure of a unital associative $\Lambda$-algebra, where $[M,\partial M]$ is the multiplicative unit.
\item[$(iii)$] The quantum product $\ast_2$ defines on $(QH_*(M), +, \ast_1)$ the structure of an associative algebra over the algebra $(QH_*(M,\partial M), +, \ast_3)$.
\end{cor}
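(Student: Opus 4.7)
The corollary is essentially a bookkeeping consequence of Theorem~\ref{thm:properties of quantum products}, so my plan is to verify each axiom in the definition of a (graded, possibly non-unital) $\Lambda$-algebra by pointing to the matching item in that theorem; no new geometry is needed.

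For part $(ii)$, I would first note that by Definition~\ref{defn: quantum products}, $\ast_3$ is a bilinear homomorphism $QH_*(M,\partial M)\times QH_*(M,\partial M)\to QH_*(M,\partial M)$, which together with item $(ii)$ of Theorem~\ref{thm:properties of quantum products} shows $\Lambda$-bilinearity. Distributivity over $+$ is item $(i)$, and associativity of $\ast_3$ is the last identity of item $(iii)$. The statement that $[M,\partial M]\otimes 1$ is a two-sided multiplicative unit follows from the second identity of item $(v)$ combined with the super-commutativity identity~\eqref{equation: super-commutativity of quantum products} (the degree of $[M,\partial M]$ is $2n$, which is even, so the sign is $+1$ and left- and right-multiplication agree). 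This gives the unital associative $\Lambda$-algebra structure on $QH_*(M,\partial M)$.

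Part $(i)$ proceeds identically but without the unit statement: bilinearity and $\Lambda$-linearity come from Definition~\ref{defn: quantum products} and item $(ii)$, distributivity from item $(i)$, and associativity of $\ast_1$ on $QH_*(M)$ from the first identity of item $(iii)$. There is no claim of a unit here because $QH_*(M,\partial M)$ carries the relative fundamental class $[M,\partial M]$ while $QH_*(M)$ does not contain a natural candidate for a $\ast_1$-unit; indeed item $(v)$ only provides a unit for the mixed product $\ast_2$ and for $\ast_3$, and one can see from equation~\eqref{equation: GW with fundamental class for m=3} that the fundamental class acts trivially in the $\ast_1$-channel, so no non-zero class can play the role of a unit for $\ast_1$ in general.

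For part $(iii)$, the claim that $\ast_2$ endows $QH_*(M)$ with the structure of an associative algebra over $(QH_*(M,\partial M),+,\ast_3)$ amounts to checking that $\ast_2$ is a right $\Lambda$-bilinear action satisfying the module associativity identities
\[
 (a\ast_1 b)\ast_2 c = a\ast_1(b\ast_2 c),\qquad (a\ast_2 b)\ast_2 c = a\ast_2(b\ast_3 c),
\]
together with the unital condition $a\ast_2([M,\partial M]\otimes 1)=a$. These are precisely the second and third identities of item $(iii)$ of Theorem~\ref{thm:properties of quantum products} and the first identity of item $(v)$. Bilinearity over $\Lambda$ is again item $(ii)$, and distributivity is item $(i)$.

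There is no genuine obstacle in this corollary; the only mildly delicate point is making sure that each Gromov--Witten-level identity from Theorem~\ref{thm:properties of quantum products} lines up with the abstract algebra axiom it is supposed to verify, especially the matching of the three variants of mixed associativity in item $(iii)$ with the two module-associativity conditions for an algebra over an algebra. Once that correspondence is spelled out, the three statements follow immediately.
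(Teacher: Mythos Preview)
Your proposal is correct and matches the paper's approach: the paper states this corollary immediately after Theorem~\ref{thm:properties of quantum products} with no separate proof, treating it as a direct consequence, and your write-up simply spells out which item of that theorem verifies which algebra axiom. One small quibble: your aside invoking~\eqref{equation: GW with fundamental class for m=3} to argue that ``no non-zero class can play the role of a unit for $\ast_1$'' is not quite what that equation says (and is unnecessary, since ``non-unital'' here only means no unit is being asserted), but this does not affect the argument.
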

Like in the closed case, we have different natural pairings. The $\KK_G$-valued pairings are given by
\bea\label{equation: Delta pairing on QH}
&\Delta_1:QH_k(M)\times QH_{2n-k}(M)\to\Lambda_0=\KK_G,\\
&\Delta_2:QH_k(M)\times QH_{2n-k}(M,\partial M)\to\Lambda_0=\KK_G,\\
&\Delta_l\(a, b\):=\imath(a\ast_l b),\ \text {for}\ l=1,2,
\eea
where the map $$\imath:QH_0(M)=\bigoplus_iH_i(M;\FF)\otimes_{\ff}\Lambda_{-i}\to\KK_G$$ sends $[pt]\otimes f_0(s)+\sum_{m=1}^{2n} a_m\otimes f_m(s)q^{-m}$ to $f_0(s)$.  The $\FF$-valued pairings are given by
\bea\label{equation: Pi pairing on QH}
&\Pi_1:QH_k(M)\times QH_{2n-k}(M)\to\FF,\\
&\Pi_2:QH_k(M)\times QH_{2n-k}(M,\partial M)\to\FF,\\
&\Pi_l=\jmath\circ\Delta_l,\ \text {for}\ l=1,2,
\eea
where the map $\jmath:\KK_G\to\FF$ sends $f(s)=\sum_{\alpha}z_{\alpha}s^{\alpha}\in \KK_G$ to $z_0$.

\begin{prop}\label{prop: properties of Delta-Pi pairings}
The pairings $\Delta_l$ and $\Pi_l$ for  $l=1,2$  satisfy
\bea\label{equation: Delta-Pi}
&\Delta_l(a,b)=\Delta_2(a\ast_l b,[M,\partial M]\otimes 1),\\
&\Pi_l(a,b)=\Pi_2(a\ast_l b,[M,\partial M]\otimes 1),
\eea
for any quantum homology classes $a\in QH_k(M), b\in QH_{2n-k}(M,\partial M)$. Moreover, the pairings $\Delta_2$ and $\Pi_2$ are non-degenerate. Since the quantum homology groups are finite-dimensional $\KK_G$-vector spaces in each degree, it follows that the paring $\Delta_2$ gives rise to Poincar\'{e}-Lefschetz duality over the field $\KK_G$.
\end{prop}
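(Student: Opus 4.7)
The identities in \eqref{equation: Delta-Pi} I would deduce directly from the unit property in Theorem \ref{thm:properties of quantum products}(v). For $a \in QH_k(M)$ and $b \in QH_{2n-k}(M, \partial M)$, the product $a \ast_l b$ lies in $QH_*(M)$, and the unit property yields
\[
\Delta_2(a \ast_l b, [M, \partial M] \otimes 1) = \imath\bigl((a \ast_l b) \ast_2 ([M, \partial M] \otimes 1)\bigr) = \imath(a \ast_l b) = \Delta_l(a, b).
\]
The analogous identity for $\Pi_l$ then follows by composing with $\jmath$ on both sides.

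For the non-degeneracy of $\Delta_2$, my plan is to compute its matrix in the natural $\KK_G$-bases. Using the chosen dual homology bases $\{e_i\}$ and $\{e_i^\vee\}$ with $e_1 = [pt]$, and hence $e_1^\vee = [M, \partial M]$, the spaces $QH_k(M)$ and $QH_{2n-k}(M, \partial M)$ admit $\KK_G$-bases $\{e_i \otimes q^{k - \deg e_i}\}_{i=1}^d$ and $\{e_j^\vee \otimes q^{\deg e_j - k}\}_{j=1}^d$ respectively. Unpacking the definitions of $\ast_2$ and of $\imath$, grading will force the $(i,j)$-entry of this matrix to depend only on the coefficient of $[pt]$ in $(e_i \ast_2 e_j^\vee)_{[A]}$, which by construction equals $\sum_{A' \in [A]} GW_{A', 1, 3}(e_i, e_j^\vee, e_1^\vee) = \sum_{A' \in [A]} GW_{A', 1, 3}(e_i, e_j^\vee, [M, \partial M])$. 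The crucial input is Proposition \ref{prop: properties of GW invariants}(ii), which makes every summand with $A' \neq 0$ vanish. Only the classical $A' = 0$ term survives, and by Definition \ref{defn: GW for A in ker h2} it reduces to $(e_i \bullet_2 e_j^\vee) \bullet_2 [M, \partial M] = \delta_{ij}$. Thus the matrix of $\Delta_2$ in these bases will be the identity, giving non-degeneracy over $\KK_G$.

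For $\Pi_2$, I would argue by a valuation trick: given $0 \neq a \in QH_k(M)$, pick $b \in QH_{2n-k}(M, \partial M)$ with $\Delta_2(a, b) \neq 0$, select any non-vanishing term $z_{\alpha_0} s^{\alpha_0}$ in its generalized Laurent expansion, and use the $\KK_G$-linearity of $\Delta_2$ in the second argument (inherited from the $\Lambda$-bilinearity of $\ast_2$ together with $\KK_G = \Lambda_0$) to conclude
\[
\Pi_2(a, s^{-\alpha_0} b) = \jmath\bigl(s^{-\alpha_0} \Delta_2(a, b)\bigr) = z_{\alpha_0} \neq 0.
\]
Since both $QH_k(M)$ and $QH_{2n-k}(M, \partial M)$ are finite-dimensional $\KK_G$-vector spaces of the same dimension $\dim_{\ff} H_*(M; \FF)$, the non-degenerate $\KK_G$-bilinear form $\Delta_2$ will then induce the desired Poincar\'e-Lefschetz duality isomorphism over the field $\KK_G$.

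The main obstacle is the bookkeeping in the middle step: one must carefully track the grading shifts contributed by the powers of $q$ in the two chosen bases so that extracting the $[pt] \otimes \KK_G \cdot q^0$ component via $\imath$ lines up precisely with the triple of arguments $(e_i, e_j^\vee, [M, \partial M])$ for which Proposition \ref{prop: properties of GW invariants}(ii) applies. Once the identification $e_1^\vee = [M, \partial M]$ is pinned down, the vanishing of all nontrivial Gromov-Witten corrections to the matrix of $\Delta_2$ becomes an immediate consequence of that proposition.
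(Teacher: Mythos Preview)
Your argument is correct and follows essentially the same route as the paper's proof: both reduce $\Delta_2$ to the classical pairing $\bullet_2$ (the paper by asserting the formula $\Delta_2(a,b)=\sum_i (a_i\bullet_2 b_{2n-i})f_i(s)g_{2n-i}(s)$ and then constructing an explicit dual element $b=a_i^{\vee}\otimes (f_i(s))^{-1}q^{-k+i}$, you by computing the Gram matrix in the dual bases and showing it is the identity), and both then handle $\Pi_2$ by arranging the $s^0$-coefficient to be nonzero. The only difference is packaging: you make the crucial input --- Proposition~\ref{prop: properties of GW invariants}(ii), which kills all quantum corrections in the $[pt]$-coefficient because $e_1^{\vee}=[M,\partial M]$ --- fully explicit, whereas the paper absorbs that step into the displayed formula for $\Delta_2(a,b)$ without comment; and for $\Pi_2$ the paper's choice of $b$ already lands $\Delta_2(a,b)$ in $\FF\subset\KK_G$, avoiding your $s^{-\alpha_0}$-rescaling.
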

\begin{proof}
The formulas \eqref{equation: Delta-Pi} follow immediately from \eqref{equation: fundamental class is the unit}. Let us prove the non-degeneracy  statement. If
$$a=\sum_{ i\geq 0}a_i\otimes f_i(s)q^{k-i},\ a_i\in H_i(M;\FF), f_i(s)\in\KK_G,$$
and
$$b=\sum_{j\geq 0}b_j\otimes g_j(s)q^{2n-k-j},\ b_j\in H_j(M,\partial M;\FF), g_j(s)\in\KK_G,$$
then we have that
$$\Delta_2(a,b)=\sum_{i\geq 0}(a_i\bullet_2 b_{2n-i})f_i(s)g_{2n-i}(s).$$
Suppose that  $a\neq 0$.  Then $\exists\ i\geq 0$ such that $a_i\otimes f_i(s)\neq 0$.  Since the intersection product $\bullet_2$ is non-degenerate, the Kronecker-dual element
$a_i^{\vee}\in H_{2n-i}(M,\partial M;\FF)$ is non-zero and $a_i\bullet_2 a_i^{\vee}\neq 0$. Consider the element
$0\neq b:=a_i^{\vee}\otimes(f_i(s))^{-1} q^{-k+i}\in QH_{2n-k}(M,\partial M)$. We get that
$$\Delta_2(a,b)=(a_i\bullet_2 a_i^{\vee})\otimes 1\neq 0$$
and
$$\Pi_2(a,b)=\jmath((a_i\bullet_2 a_i^{\vee})\otimes 1)=a_i\bullet_2 a_i^{\vee}\neq 0.$$
We conclude that  the pairings $\Delta_2$ and $\Pi_2$ are non-degenerate.
\end{proof}
\begin{rem}
In general, the intersection product $\bullet_1$ is degenerate. If it  is non-degenerate, the pairings $\Delta_1$ and $\Pi_1$ are also non-degenerate. It happens, for example, when $\partial M$ is a homology sphere.
\end{rem}

\begin{rem}\label{remark: valuation on QH}
The Novikov ring $\Lambda$ and quantum homology groups $QH_*(M,\partial M)$, $QH_*(M)$  admit the following valuation. Define a valuation $$\nu:\KK_G\to G\cup\{-\infty\}$$ on the field $\KK_G$   by
\beq\label{equation: valuation nu on Lambda}
\begin{cases}
&\nu\(f(s)=\sum\limits_{\alpha\in G}z_{\alpha}s^{\alpha}\):=\max\{\alpha| z_{\alpha}\neq 0\},\; f(s)\not\equiv 0\\
&\nu(0)=-\infty.
\end{cases}
\eeq
Extend $\nu $ to $\Lambda$ by $$\nu(\lambda):=\max\{\alpha|p_{\alpha}\neq 0\},$$ where $\lambda$ is uniquely represented by $$\lambda=\sum\limits_{\alpha\in G}p_{\alpha}s^{\alpha},\;\; p_{\alpha}\in\FF[q,q^{-1}].$$
Note that for all $\lambda, \mu\in\Lambda$ we have
$$
\nu(\lambda+\mu)\leq\max(\nu(\lambda), \nu(\mu)),\;\nu(\lambda\mu)=\nu(\lambda)+\nu(\mu),\;\nu(\lambda^{-1})=
-\nu(\lambda).
$$
Now,  any non-zero $a\in QH_*(M,\partial M)\(QH_*(M)\)$ can be uniquely written as $a=\sum_ia_i\otimes\lambda_i$, where $a_i\in H_*(M,\partial M; \FF)\(H_*(M; \FF)\)$ and $\lambda_i\in\Lambda$. Define $$\nu(a):=\max_i\{\nu(\lambda_i)\}.$$
\end{rem}

\subsection{Examples}
In the following examples we consider the base field $\FF=\QQ$.
\begin{ex}\label{example: quantum homology of BCP}
Let  $(M,\omega):=(\til{\BB^4} , \omega_{\delta})$ be the symplectic blow-up at the origin of the standard
unit $4$-ball in $(\CC^2,\omega_0)$ of  a small size $0<\delta<<1$. Then $\{[pt], E\}$ is the integral homogeneous basis of $H_*(M;\QQ)$ and $\{E^{\vee}, [M,\partial M]\}$ is the integral homogeneous basis of $H_*(M, \partial M;\QQ)$. Since $\omega_{\delta}(E)=\delta>0$ and $c_1(E)=1$, it follows
that $\Gamma=H_2^S(M)=\ZZ\la E\ra$ and $G=\cfrac{\delta}{2}\cdot\ZZ$, see \eqref{equation: Gamma group of spherical classes} and \eqref{equation: G group of periods} for the notation.
From \eqref{equation: GW of BCP } we find the quantum products:
\beq\label{equation: quantum products for BCP}
\begin{tabular}{|c|l|} \hline
$\ast_1$
&\begin{tabular}{l}
\\
$[pt]\ast_1 H_*(M;\QQ)=0$    \\
\\
$E\ast_1E=-[pt]+ E\otimes s^{-\delta}q^{-2}$  \\
\end{tabular}\\
\hline
$\ast_2$
&\begin{tabular}{l}
\\
$[pt]\ast_2 E^{\vee}=0$      \\
\\
$E\ast_2E^{\vee}=[pt]- E\otimes s^{-\delta}q^{-2}$    \\
\end{tabular}\\
\hline
$\ast_3$
&\begin{tabular}{l}
\\
$E^{\vee}\ast_3E^{\vee}=-E^{\vee}\otimes s^{-\delta}q^{-2}$ \\
\\
$[M,\partial M]$ is the $\ast_3$-unit \\
\end{tabular}\\
\hline
\end{tabular}
\eeq
\newline
\end{ex}

\begin{ex}\label{example: quantum homology of DTSCP}
Let $(M,\omega):=(\til{X}, \omega_{\delta})$  be the symplectic blow-up of  the  cotangent unit disk bundle $X:=\DD T^*\Sigma$, see Example~\ref{example: GW of blowup of a  cotangent disk bundle of the surface}.  Then $\{[pt], \{a_i,b_i\}_{i=1}^g, [\Sigma], E\}$ is the integral homogeneous basis of $H_*(M;\QQ)$, and $\{[F], E^{\vee}, \{a^{\vee}_i,b^{\vee}_i\}_{i=1}^g, [M,\partial M]\}$ is the integral homogeneous basis of $H_*(M, \partial M;\QQ)$. Since $\omega_{\delta}(E)=\delta>0, \omega_{\delta}([\Sigma])=0 $ and $c_1(E)=1, c_1([\Sigma])=0$, it follows that $\Gamma=\ZZ\la E\ra$ and $G=\cfrac{\delta}{2}\cdot\ZZ$, see \eqref{equation: Gamma group of spherical classes} and \eqref{equation: G group of periods} for the notation. From \eqref{equation: GW of  DTSCP } we find the quantum products:
\begin{equation}\label{equation: quantum product1 for DTSCP}\hspace{-38mm}
\begin{tabular}{|c|l|} \hline
$\ast_1$
&\begin{tabular}{l}
\\
$[pt]\ast_1 H_*(M;\QQ)=0$\\
\\
$a_i\ast_1a_j=a_i\ast_1b_j=b_i\ast_1b_j=0$\\
\\
$[\Sigma]\ast_1a_i=[\Sigma]\ast_1b_i=[\Sigma]\ast_1E=0$\\
\\
$[\Sigma]\ast_1[\Sigma]=(2-2g)[pt]$\\
\\
$E\ast_1a_i=E\ast_1b_i=0$\\
\\
$E\ast_1E=-[pt]+ E\otimes s^{-\delta}q^{-2}$
\end{tabular}\\
\hline
\end{tabular}
\end{equation}

\begin{equation}\label{equation: quantum product2 for DTSCP}\hspace{-16mm}
\begin{tabular}{|c|l|} \hline
$\ast_2$
&\begin{tabular}{l}
\\
$z\ast_2w=z\bullet_2w$ for any basic $(z,w)\neq(E,E^{\vee})$\\
\\
$E\ast_2E^{\vee}=[pt]- E\otimes s^{-\delta}q^{-2}$\\
\\
$z\ast_2 [M,\partial M]=z, \forall z\in H_*(M;\QQ)$
\end{tabular}\\
\hline
\end{tabular}
\end{equation}

\begin{equation}\label{equation: quantum product3 for DTSCP}\hspace{-10mm}
\begin{tabular}{|c|l|} \hline
$\ast_3$
&\begin{tabular}{l}
\\
$[F]\ast_3[F]=[F]\ast_3E^{\vee}=[F]\ast_3a^{\vee}_i=[F]\ast_3b^{\vee}_i=0$\\
\\
$E^{\vee}\ast_3a^{\vee}_i=E^{\vee}\ast_3b^{\vee}_i=0$\\
\\
$E^{\vee}\ast_3E^{\vee}=-E^{\vee}\otimes s^{-\delta}q^{-2}$\\
\\
$a^{\vee}_i\ast_3a^{\vee}_j=a^{\vee}_i\ast_3b^{\vee}_j=b^{\vee}_i\ast_3b^{\vee}_j=0$\\
\\
$[M,\partial M]$ is the $\ast_3$-unit\
\end{tabular}\\
\hline
\end{tabular}
\end{equation}
\end{ex}

\end{document}